\numberwithin{equation}{section}
\newtheorem{theorem}{Theorem}[section]
\newtheorem{proposition}[theorem]{Proposition}
\newtheorem{lemma}[theorem]{Lemma}
\newtheorem{definition}[theorem]{Definition}
\theoremstyle{remark}
\newtheorem{remark}[theorem]{Remark}
\newcommand{\bke}[1]{\left( #1 \right)}
\newcommand{\bket}[1]{\left\{ #1 \right\}}
\newcommand{\norm}[1]{\| #1 \|}
\newcommand{\al}{\alpha}
\newcommand{\de}{\delta}
\newcommand{\e}{\epsilon}
\newcommand{\ga}{{\gamma}}
\newcommand{\la}{\lambda}
\newcommand{\Om}{{\Omega}}
\newcommand{\td}{\tilde}
\newcommand{\Bp}{\dot B_{p,\infty}^{3/p-1}}
\newcommand{\R}{{\mathbb R }}\newcommand{\RR}{{\mathbb R }}
\newcommand{\N}{{\mathbb N}}
\newcommand{\Z}{{\mathbb Z}}
\newcommand{\pd}{{\partial}}
\newcommand{\lec}{\lesssim}
\newcommand{\I}{\infty}
\renewcommand{\div}{\mathop{\mathrm{div}}}
\newcommand{\supp}{\mathop{\mathrm{supp}}}
\newcommand{\donothing}[1]{{}}
\newcommand{\EQ}[1]{\begin{equation}\begin{split} #1 \end{split}\end{equation}}
\newcommand{\EQN}[1]{\begin{equation*}\begin{split} #1 \end{split}\end{equation*}}
\DeclareMathOperator*{\esssup}{ess\,sup}
\newcommand{\xRightarrow}[2][]{\ext@arrow 0359\Rightarrowfill@{#1}{#2}}
\newcommand{\loc}{\mathrm{loc}} 
\newcommand{\uloc}{\mathrm{uloc}}
\begin{document}

\title{Discretely self-similar solutions to the Navier-Stokes equations with data in $L^2_{\loc}$ satisfying the local energy inequality} 
\author{Zachary Bradshaw and Tai-Peng Tsai}
\date{\today}
\maketitle 

\begin{abstract}Chae and Wolf recently constructed discretely self-similar solutions to the Navier-Stokes equations for any discretely self similar data in $L^2_{\loc}$.  Their solutions are in the class of local Leray solutions with projected pressure, {and satisfy the ``local energy inequality with projected pressure''.}  In this note, {for the same class of initial data,} we construct discretely self-similar suitable weak solutions to the Navier-Stokes equations 
 {that satisfy the classical local energy inequality of Scheffer and Caffarelli-Kohn-Nirenberg. We also}
obtain an explicit formula for the pressure in terms of the velocity.  
Our argument involves a new purely local energy estimate for discretely self-similar solutions with data in $L^2_{\loc}$ and an approximation of divergence free, discretely self-similar vector fields in $L^2_{\loc}$ by divergence free, discretely self-similar elements of $L^3_w$.
\end{abstract}


\section{Introduction}

The Navier-Stokes equations describe the evolution of a viscous incompressible fluid's velocity field $v$ and associated scalar pressure $\pi$.  In particular, $v$ and $\pi$ are required to satisfy
\begin{align}\label{eq.NSE}
 &\partial_tv-\Delta v +v\cdot\nabla v+\nabla \pi = 0,
\\& \nabla \cdot v =0,
\end{align}
in the sense of distributions. 
For our purposes, \eqref{eq.NSE} is applied on $\R^3\times (0,\I)$ and $v$ evolves from a prescribed, divergence free initial data $v_0:\R^3\to \R^3$.  Solutions to \eqref{eq.NSE} exhibit a 
natural scaling: if $v$ satisfies \eqref{eq.NSE}, then for any $\lambda>0$
\begin{equation}
	v^{\lambda}(x,t)=\lambda v(\lambda x,\lambda^2t),
\end{equation}
is also a solution with pressure 
\begin{equation}
	\pi^{\lambda}(x,t)=\lambda^2 \pi(\lambda x,\lambda^2t),
\end{equation}
and initial data 
\begin{equation}
v_0^{\lambda}(x)=\lambda v_0(\lambda x).
\end{equation}
A solution is called self-similar (SS) if $v^\lambda(x,t)=v(x,t)$ for all $\lambda>0$ and is discretely self-similar with factor $\lambda$ (i.e.~$v$ is $\lambda$-DSS) if this scaling invariance holds for a given $\lambda>1$. Similarly, $v_0$ is self-similar (a.k.a.~$(-1)$-homogeneous) if $v_0(x)=\lambda v_0(\lambda x)$ for all $\lambda>0$ or $\lambda$-DSS if this holds for a given $\lambda>1$.  
These solutions can be either forward or backward if they are defined on $\R^3\times (0,\I)$ or $\R^3\times (-\I,0)$ respectively.  In this note we work exclusively with forward solutions and omit the qualifier ``forward''.

Self-similar solutions satisfy an ansatz for $v$ in terms of a time-independent profile $u$, namely, 
\begin{equation}\label{ansatz1}
v(x,t) = \frac 1 {\sqrt {t}}\,u\bigg(\frac x {\sqrt{t}}\bigg), 
\end{equation} 
where $u$ solves the \emph{Leray equations}
\begin{equation} 
\begin{array}{ll}\label{eq:stationaryLeray}
 -\Delta u-\frac 1 2 u-\frac 1 2 y\cdot\nabla u +u\cdot \nabla u +\nabla p = 0&
\\[3pt]  \nabla\cdot u=0&
\end{array}
\mbox{~in~}\R^3,
\end{equation}
in the variable $y=x/\sqrt{ t}$.
Discretely self-similar solutions are determined by their behavior on the time interval $1\leq t\leq \lambda^2$ and satisfy the ansatz
\begin{equation}\label{ansatz2}
v(x,t)=\frac 1 {\sqrt{t}}\, u(y,s),
\end{equation}
where
\begin{equation}\label{variables}
y=\frac x {\sqrt{t}},\quad s=\log t.
\end{equation}
The vector field $u$ is $T$-periodic with period $T=2\log \lambda$ and solves the \emph{time-dependent Leray equations}
\begin{equation} 
\begin{array}{ll}
\label{eq:timeDependentLeray}
 \partial_s u-\Delta u-\frac 1 2 u-\frac 1 2 y\cdot\nabla u +u\cdot \nabla u +\nabla p = 0& 
\\[3pt]  \nabla\cdot u = 0&
\end{array}
\mbox{~in~}\R^3\times \R.
\end{equation}
Note that the \emph{similarity transform} \eqref{ansatz2}--\eqref{variables} gives a one-to-one correspondence between solutions to \eqref{eq.NSE} and \eqref{eq:timeDependentLeray}. Moreover, when $v_0$ is SS or DSS, the initial condition $v|_{t=0}=v_0$ corresponds to a boundary condition for $u$ at spatial infinity, see \cite{KT-SSHS,BT1,BT2}.  

Self-similar solutions are interesting in a variety of contexts as candidates for ill-posedness  or finite time blow-up of solutions to the 3D Navier-Stokes equations (see \cite{GuiSve,JiaSverak,JiaSverak2,leray,NRS,Tsai-ARMA} and the discussion in \cite{BT1}. Forward self-similar solutions are compelling candidates for non-uniqueness \cite{JiaSverak2,GuiSve}. Until recently, the existence of forward self-similar solutions was only known for small data \cite{Barraza,CP,GiMi,Koch-Tataru,Kato}. Such solutions are necessarily unique.  In \cite{JiaSverak},  Jia and \v Sver\'ak constructed forward self-similar solutions for large data where the data is assumed to be H\"older continuous away from the origin. 
This result has been generalized in a number of directions by a variety of authors \cite{BT1,BT2,BT3,Chae-Wolf,KT-SSHS,LR2,Tsai-DSSI}.  This paper can be understood in the context of \cite{BT1,Chae-Wolf,LR2} and we briefly recall the main results of these papers.

In \cite{BT1}, we generalize \cite{JiaSverak} in two ways.  First, all smoothness assumptions on the initial data are removed; we only require $v_0\in L^3_w$ (and $v_0$ divergence free and SS or DSS).  Second, we allow the data to be DSS for any $\la>1$, in which case we obtain DSS solutions as opposed to SS solutions -- in contrast, the method of \cite{JiaSverak} can be adapted to give DSS solutions but only when $\la$ is close to $1$ \cite{Tsai-DSSI}.  The method of proof in \cite{BT1} has since been extended to the half-space in \cite{BT2} and to initial data in the Besov spaces $\Bp$ when $3<p<6$ \cite{BT3}. Solutions which satisfy 
a rotationally corrected scaling invariance
are also constructed in \cite{BT2}.

The solutions of \cite{BT1} belong to the class of \emph{local Leray solutions}.  This class was introduced by Lemari\'e-Rieusset  in \cite{LR} to provide a local analogue of Leray's weak solutions \cite{leray}.  We recall the definition of local Leray solutions in full.  For $q \in [1,\infty)$, we say $f\in L^q_{\uloc}$ if 
\[
\norm{f}_{L^q_{\uloc}} =\sup_{x \in\R^3} \norm{f}_{L^q(B(x,1))}<\infty.
\]

\begin{definition}[Local Leray solutions]\label{def:localLeray} A vector field $v\in L^2_{\loc}(\R^3\times [0,\infty))$ is a local Leray solution to \eqref{eq.NSE} with divergence free initial data $v_0\in L^2_{\uloc}$ if:
\begin{enumerate}
\item for some $\pi\in L^{3/2}_{\loc}(\R^3\times [0,\infty))$, the pair $(v,\pi)$ is a distributional solution to \eqref{eq.NSE},
\item for any $R>0$, $v$ satisfies
\begin{equation}\notag
\esssup_{0\leq t<R^2}\,\sup_{x_0\in \R^3}\, \int_{B_R(x_0 )}\frac 1 2 |v(x,t)|^2\,dx + \sup_{x_0\in \R^3}\int_0^{R^2}\int_{B_R(x_0)} |\nabla v(x,t)|^2\,dx \,dt<\infty,\end{equation}
\item for all compact subsets $K$ of $\R^3$ we have $v(t)\to v_0$ in $L^2(K)$ as $t\to 0^+$,
\item $v$ is suitable in the sense of Caffarelli-Kohn-Nirenberg, i.e., for all cylinders $Q$ compactly supported in  $ \R^3\times(0,\infty )$ and all non-negative $\phi\in C_0^\infty (Q)$, we have 
\EQ{\label{CKN-LEI}
&\int |v(t)|^2\phi \,dx +2\int \int |\nabla v|^2\phi\,dx\,dt 
\\&\leq 
\int\int |v|^2(\partial_t \phi + \Delta\phi )\,dx\,dt +\int\int (|v|^2+2\pi)(v\cdot \nabla\phi)\,dx\,dt,
}
\item for every $x_0\in \R^3$, there exists $c_{x_0}\in L^{3/2}(0,T)$ such that
\begin{align*}
		p(x,t)-c_{x_0}(t)&=-\frac 1 3 |v(x,t)|^2 +\frac 1 {4\pi}\int_{B_2(x_0)} K(x-y):v(y,t)\otimes v(y,t)\,dy
		\\&+\frac 1 {4\pi} \int_{\R^3\setminus B_2(x_0)} (K(x-y)-K(x_0-y)):v(y,t)\otimes v(y,t)\,dy,
\end{align*}
{in $L^{3/2}(0,T; L^{3/2}(B_1(x_0)))$,}
where $K(x)=\nabla^2(1/|x|)$.
\end{enumerate}
\end{definition}

In \cite{LR}, Lemari\'e-Rieusset constructed global in time local Leray solutions if $v_0$ belongs to $E^2$, the closure of $C_0^\infty$ in the $L^2_{\uloc}(\R^3)$ norm.  
See Kikuchi-Seregin \cite{KiSe} for another construction which treats the pressure carefully.    Note that \cite{LR}, \cite{KiSe} and \cite{JiaSverak,JiaSverak2} contain alternative definitions of local Leray solutions.  On one hand, \cite{KiSe} requires the pressure satisfies a certain formula (we will {establish a similar} pressure formula for our solutions, see Theorem \ref{thrm.main}).  In \cite{JiaSverak,JiaSverak2}, the explicit pressure formula is replaced by a decay condition imposed on the solution at spatial infinity, namely, for all $R>0$
\[
\lim_{|x_0|\to \I} \int_0^{R^2}\int_{{B(x_0,R)}} |v|^2\,dx\,dt = 0.
\]
Jia and  \v Sver\'ak claim in \cite{JiaSverak,JiaSverak2} that, if $v$ exhibits this decay, then the pressure formula from \cite{KiSe} is valid.  Since the decay property is easier to directly establish for a given solution, this justifies using it in place of the explicit pressure formula in the definition of local Leray solutions.  It turns out that these properties are equivalent when $v_0\in E^2$.  This can be proved using ideas contained in a recent preprint of Maekawa, Miura, and Prange \cite{MMP}  on the construction of local energy solutions in the half space.

Local Leray solutions are known to satisfy a useful a priori bound.  Let $\mathcal N (v_0)$ denote the class of local Leray solutions with initial data $v_0$. The following estimate is well known for local Leray solutions (see \cite{JiaSverak}): for all $\tilde v\in \mathcal N (v_0)$ and $r>0$ we have
\begin{equation}\label{ineq.apriorilocal}
\esssup_{0\leq t \leq \sigma r^2}\sup_{x_0\in \RR^3} \int_{B_r(x_0)}\frac {|\tilde v(x,t)|^2} 2 \,dx+ \sup_{x_0\in \RR^3}	\int_0^{\sigma r^2}\int_{B_r(x_0)} |\nabla \tilde v|^2\,dx\,dt <C {A} ,
\end{equation}
where 
\begin{equation}\label{ineq.apriorilocalconstant}
{
A = \sup_{x_0\in \RR^3} \int_{B_r(x_0)} \frac {|v_0|^2}2\,dx,\quad
 \sigma(r) =c_0\, \min\big\{r^2A^{-2} , 1  \big\},}
\end{equation}
for a small universal positive constant $c_0$.

Concurrently to the publication of \cite{BT1}, Lemari\'e-Rieusset published the book \cite{LR2}, which includes a chapter on the self-similar solutions of \cite{JiaSverak}.  Here, Lemari\'e-Rieusset generalizes the space of initial data to include any $L^2_{\loc}$, divergence free, self-similar vector field. The main elements of his argument are as follows. He first uses the Leray-Schauder approach of \cite{JiaSverak} to construct self-similar solutions for initial data $v_0$ satisfying $|v_0(x)|\lesssim |x|^{-1}$.  This construction is more general than that in \cite{JiaSverak} but less general than that in \cite{BT1}.  But, provided $v_0$ is self-similar, $v_0\in L^2_{\loc}$ if and only if $v_0\in L^2_{\uloc}$.  And, furthermore, if $v_0$ is self-similar and belongs to $L^2_{\uloc}$, then it can be approximated by a sequence $v_0^{(k)}$ where each $|v_0^{(k)}(x)|\lesssim |x|^{-1}$. Then, the first construction gives local Leray solutions for each $v_0^{(k)}$ and, because local Leray solutions satisfy the a priori bound \eqref{ineq.apriorilocal} depending only on the $L^2_{\uloc}$ norm of their initial data, these will converge to a SS local Leray solution with $L^2_{\loc}$ data. This argument breaks down for DSS solutions since $L^2_{\loc}\cap DSS \neq L^2_{\uloc}\cap DSS$ (see \eqref{DSS-example} for an example) and, therefore, we cannot get the uniform bound \eqref{ineq.apriorilocal} on a sequence of approximating solutions for free. 

Chae and Wolf, on the other hand, introduced an entirely new method in \cite{Chae-Wolf} to construct $\la$-DSS solutions for any $\la>1$ and initial data $v_0\in L^2_{\loc}(\R^3)$. These solutions live in the class of ``local Leray solutions with projected pressure,'' which means they satisfy a modified local energy inequality instead of the classical local energy inequality \eqref{CKN-LEI} of \cite{CKN}. To construct these solutions, Chae and Wolf use a fixed point argument to solve the mollified Navier-Stokes equations (this is the same system studied in \cite{BT1}, but written in physical variables as opposed to the similarity variables, {see \eqref{mollified1} and \eqref{mollified2}}).   To apply the fixed point argument, Chae and Wolf first prove existence for the (mollified) linearized equations where the given drift velocity is DSS.  They then apply a fixed point theorem (the space for the fixed point argument is a bounded set of the DSS subspace of $L^{18/5}(0,T;L^3(B_1))$ -- $B_r$ denotes the ball of radius $r$ centered at the origin -- defined below \cite[(3.1)]{Chae-Wolf}) to prove that there exists a drift velocity which matches the solution.  This gives existence of a DSS solution to the mollified Navier-Stokes equations.  Note that the approximations satisfy the a priori (energy) bound \cite[(2.35)]{Chae-Wolf} and the norm of the mollification term can be absorbed for $T$ sufficiently small.

In this paper we give a simple, alternative proof of the result in \cite{Chae-Wolf}. The following theorem is our main result.

\begin{theorem}\label{thrm.main}
Assume $v_0\in L^2_{\loc}(\R^3)$ is a divergence free $\la$-DSS vector field for some $\la>1$.  Then there exists a $\la$-DSS distributional solution $v$ to \eqref{eq.NSE} and associated pressure $\pi$ so that $v$ is suitable in the sense of \cite{CKN} and satisfies
\[
\lim_{t\to 0^+}\|v(t)-v_0\|_{L^2(K)} = 0,
\]
for every compact subset $K$ of $\R^3$. Moreover, for any $T>0$ and compact subset $K$ of $\R^3$, we have $v\in L^\I(0,T;L^2(K))\cap L^2(0,T;H^1(K))$ and $\pi \in L^{3/2}(0,T;L^{3/2}(K))$.   Furthermore, for any $(x,t)\in \R^3\times (0,\I)$, the pressure satisfies the following formula
\EQ{ \label{eq.pressure0}
	\pi(x,t) =&  -\frac 1 3 {|v|^2(x,t)} 
	\\
	&+ \lim_{\delta \to 0} \int_{|y|>\delta} K_{ij}(x-y)v_i(y,t) v_j(y,t)\,dy,
}
in  $L^{3/2}_{\loc}(\R^3\times (0,\I))$.
\end{theorem}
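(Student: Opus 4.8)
The plan is to construct the solution by approximation: replace the $L^2_{\loc}$ data by better-behaved $\la$-DSS data in $L^3_w$, solve the problem there with machinery already available, and then pass to the limit. I would first establish the approximation lemma advertised in the abstract: any divergence free $\la$-DSS field $v_0\in L^2_{\loc}$ is approximated in $L^2_{\loc}$ by divergence free $\la$-DSS fields $v_0^{(k)}\in L^3_w$. The natural construction truncates $v_0$ near the origin and mollifies it on the fundamental annulus $\{1\le |x|\le \la\}$, restores the divergence free condition with a Bogovskii-type corrector supported in that annulus, and periodizes via the scaling $w(x)=\la w(\la x)$ to extend to all of $\R^3$. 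The resulting field is $\la$-DSS and, being bounded on the fundamental annulus, satisfies $|v_0^{(k)}(x)|\lesssim M_k|x|^{-1}$, hence lies in $L^3_w$, while converging to $v_0$ in $L^2_{\loc}$. For each $v_0^{(k)}$ I then invoke the construction of \cite{BT1} to obtain a $\la$-DSS local Leray solution $v^{(k)}$ with pressure $\pi^{(k)}$; being a local Leray solution in the sense of Definition \ref{def:localLeray}, $v^{(k)}$ is already suitable in the sense of \cite{CKN}, i.e.\ satisfies \eqref{CKN-LEI}. This is precisely what distinguishes the approach from \cite{Chae-Wolf}, whose solutions satisfy only a projected-pressure inequality.

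The heart of the argument, and the step I expect to be hardest, is a local energy bound for the $v^{(k)}$ that is uniform in $k$ but depends only on the genuinely local quantity $\int_{B_{\la}}|v_0^{(k)}|^2\,dx$ rather than on the $L^2_{\uloc}$ norm used in \eqref{ineq.apriorilocal}; as noted in the introduction, for $\la$-DSS fields $L^2_{\loc}$ is strictly larger than $L^2_{\uloc}$, so the standard bound is unavailable. I would derive the estimate from \eqref{CKN-LEI}, testing against a spatial cutoff on a large ball $B_R$ and a temporal cutoff on the fundamental cell $1\le t\le \la^2$. The flux term $\int\int(|v|^2+2\pi)(v\cdot\nabla\phi)$ and, above all, the pressure must be controlled without global bounds. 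Here the $\la$-DSS structure is decisive: since $\int v\cdot\nabla\phi\,dx=0$ by incompressibility, the pressure may be replaced by $\pi-c_{x_0}(t)$ as in item (5) of Definition \ref{def:localLeray}, and the contribution of the dyadic shell $\la^j<|y|<\la^{j+1}$ then carries a factor $\la^{j}$ from the DSS growth of the energy but a factor $\la^{-4j}$ from the kernel-difference bound $|K(x-y)-K(x_0-y)|\lesssim |x-x_0|\,|y|^{-4}$, so the shell sums form a convergent geometric series of ratio $\la^{-3}$. This yields a closed, absorbable estimate over the fundamental time cell, and $\la$-DSS scaling propagates it to all $t>0$, giving bounds uniform in $k$ for $v^{(k)}$ in $L^\infty_t L^2\cap L^2_t H^1$ and for $\pi^{(k)}$ in $L^{3/2}_t L^{3/2}$ on compact sets.

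With these uniform bounds I would pass to the limit. Aubin-Lions-type compactness, using the equation to bound $\partial_t v^{(k)}$ in a negative-order local space, gives strong $L^3_{\loc}$ convergence of a subsequence, enough to pass the nonlinearity, while weak-$*$ and weak limits handle the linear and dissipation terms. The class of $\la$-DSS fields is closed under these limits, so the limit $v$ is $\la$-DSS; lower semicontinuity of the dissipation term together with strong convergence of the remaining terms preserves \eqref{CKN-LEI}, so $v$ is suitable; and the uniform energy bound combined with $v_0^{(k)}\to v_0$ yields the strong $L^2(K)$ attainment of the initial data. For the pressure I would first establish \eqref{eq.pressure0} for each $v^{(k)}$ by solving $-\Delta\pi^{(k)}=\partial_i\partial_j(v_i^{(k)}v_j^{(k)})$ and writing $\pi^{(k)}$ through the Newtonian potential as the local Calder\'on-Zygmund part $-\tfrac13|v^{(k)}|^2$ plus a singular integral of $v^{(k)}\otimes v^{(k)}$ against $K$; the same shell summation as above, now with $|K(x-y)|\lesssim |y|^{-3}$ against energy growth $\la^{j}$, gives the convergent ratio $\la^{-2}$, so the global principal-value integral in \eqref{eq.pressure0} converges without any $x_0$-dependent normalization and the formula is genuinely global. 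Passing $k\to\infty$ using the strong local convergence of $v^{(k)}\otimes v^{(k)}$ then transfers \eqref{eq.pressure0} to $(v,\pi)$, completing the proof.
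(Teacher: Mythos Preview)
Your overall architecture matches the paper's: approximate $v_0$ by divergence free $\la$-DSS data in $L^3_w$ via a Bogovskii correction on a fundamental annulus, invoke \cite{BT1} for each approximant, prove a uniform-in-$k$ local energy bound depending only on $\|v_0^{(k)}\|_{L^2(B_\la)}$, pass to the limit with Aubin--Lions and lower semicontinuity of the dissipation to retain \eqref{CKN-LEI}, and transfer the pressure formula via shell summation. The approximation lemma, the compactness step, and the far-field pressure shell argument are essentially as in the paper.

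The genuine gap is in the a priori bound. You propose to derive it directly from \eqref{CKN-LEI} for the suitable weak solution $v^{(k)}$, tested on the temporal cell $[1,\la^2]$. Two things go wrong. First, on a fixed-length time interval there is no smallness parameter: after Gagliardo--Nirenberg the cubic flux $\int\!\!\int |v|^2(v\cdot\nabla\phi)$ produces $C\!\int \al(s)^3\,ds$, which is not absorbable, and the natural ``initial'' quantity $\int_{B_R}|v^{(k)}(1)|^2$ is precisely what you are trying to control---it cannot be reduced to $\|v_0^{(k)}\|_{L^2(B_\la)}$ without already having the estimate. The paper instead works on $[0,T]$ for small $T$ and uses the DSS rescaling identity $\int_{B_{\la^k}}|v(t)|^q = \la^{(3-q)k}\int_{B_1}|v(\la^{-2k}t)|^q$ to convert every spillover onto $B_\la$ into an integral on $B_1$ at an \emph{earlier} time; this is what allows the right-hand side to be written entirely in terms of $\tilde\al(t)=\sup_{0\le s\le t}\al(s)$ and closed by a continuity/Gr\"onwall argument with $T$ small. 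Your shell summation is correct for $\pi_{\mathrm{far}}$ but does not handle the cubic term. Second, and more fundamentally, even on $[0,T]$ the continuity argument requires $t\mapsto \int_{B_1}|v^{(k)}(t)|^2$ to be continuous, and this is \emph{not known} for a local Leray solution. The paper explicitly flags this obstruction and resolves it by going back one more layer, to the mollified approximants $v_\e$ of \cite{BT1} (see \eqref{mollified1}--\eqref{mollified2}), which are smooth so that $\al_\e(t)$ is continuous; the bound is proved for $v_\e$ uniformly in $\e$ and then passed to $v^{(k)}$ by weak limits. Without this detour to the mollified level, your step ``this yields a closed, absorbable estimate'' does not close.
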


 \noindent \emph{Comments on Theorem \ref{thrm.main}}
 \begin{enumerate}
 
 \item In \cite{Chae-Wolf}, the data also belongs to $L^2_{\loc}$, but the solution is not shown to satisfy the local energy inequality of \cite{CKN}. Instead, it satisfies a ``local energy inequality with projected pressure''.  Since the solution constructed in Theorem \ref{thrm.main} satisfies the traditional local energy inequality, this theorem is a slight refinement of the main result of \cite{Chae-Wolf}. Furthermore, we are careful to give a precise {formulation \eqref{eq.pressure0}} of the pressure and its connection to the velocity.  The relationship between $v$ and $\pi$ is less clear in \cite{Chae-Wolf}.

\item {The integral in  \eqref{eq.pressure0} 
is not a Calderon-Zygmund singular integral because we do not have a global bound of $v$. It is defined in $L^{3/2}_{\loc}$ using the DSS property.}

 \item Our method of proof is by approximation and is similar to the argument from \cite{LR2}.  The main difference is that we need to construct a sequence of approximating solutions and establish a new \emph{a priori} bound for these solutions for DSS data -- in \cite{LR2} the bound \eqref{ineq.apriorilocal} is sufficient (and free).  Note that an approximation argument using \eqref{ineq.apriorilocal} was also used by the authors in \cite{BT1} to construct SS solutions as a limit of DSS solutions where the scaling factors are converging to $1$.
 
 \item {Generally, the solution $v$ is not necessarily a local Leray solution because $v_0$ may not be in $L^2_\uloc$, and we do not assert the uniform bounds {(item 2)} in Definition \ref{def:localLeray}.   
Consider the  DSS function in $L^2_{\loc}$ for $0<a<\frac 32$,
\begin{equation}\label{DSS-example}
f_a(x) = \sum_{k\in \mathbb Z}\la^k f_{a,0}(\la^k x), \quad
f_{a,0}(x)  = |x-x_0|^{-a} \chi(x-x_0), 
\end{equation}
where $1+r<|x_0|<\la-r$ for some $r>0$,  and $\chi$ is the characteristic function of the ball $B_r(0)$.
It is not in $L^2_{\uloc}$ when $1<a<\frac 32$ for its behavior at infinity. 
It is in $L^2_{\uloc}$ when $0<a\le 1$.
The function $f_1(x)$ for $a=1$ is given in Comment 4 after \cite[Theorem 1.2]{BT2} as an inapplicable example since it is not in $L^{3,\infty}(\R^3)$. 

\item  If $v_0\in L^2_{\uloc}$, then it is not difficult to obtain uniform bounds on $v$ in the sense of item 2 from Definition \ref{def:localLeray}. Furthermore,  item 5 from Definition \ref{def:localLeray} can be established whenever {$v_0\in E^2$} (see \cite{MMP}).   
Thus, our construction yields DSS local Leray solutions whenever the data is DSS, divergence free, and in $E^2$.  
 }
 \end{enumerate}

Our strategy for proving Theorem \ref{thrm.main} is to approximate a solution with data in $L^2_{\loc}$ using solutions constructed in \cite{BT1}.  There are several steps.  First we need to prove that DSS data in $L^2_{\loc}$ can be approximated in $L^2(B_1)$ by DSS data in $L^3_w$.  This is the subject of \S\ref{sec.approx}.  Then, \cite{BT1} gives us a sequence of DSS solutions in the local Leray class.  To prove that these solutions converge to a solution with $L^2_{\loc}$ data satisfying the desired pressure formula, we need to establish new a priori bounds for the solutions from \cite{BT1} which are independent of the $L^3_w$ norm of the initial data (this is done in \S\ref{sec.bound}) and also prove that they satisfy the pressure formula (see \S\ref{sec.pressure}).   In \S\ref{sec.extension} and \S\ref{sec.construction}, we put these ingredients together to prove Theorem \ref{thrm.main}.

 \section{A limiting pressure formula for DSS solutions}\label{sec.pressure}
 
 In this section we will prove that, under certain conditions, the limiting pressure distribution of an approximation scheme for \eqref{eq.NSE} inherits the structure of the approximate pressure distributions.  This result will be applied in \S\ref{sec.bound} and \S\ref{sec.construction}.
 
 \begin{lemma}\label{lemma.pressure}Fix $\la>1$ and $T>0$.
 	Let $v_0\in L^2_{\loc}$ be a given divergence free, $\la$-DSS vector field and assume $\{v_0^{(k)}\} \subset L^2_{\loc}$ is a sequence of divergence free, $\la$-DSS vector fields so that $v_0^{(k)}\to v_0$ in $L^2(B_1)$.
 	Assume $v_k$ and $\tilde v_k$ are divergence free, $\la$-DSS vector fields and that there exists a distribution $\pi_k$ so that the following conditions are satisfied:
 	\begin{itemize}
 		\item $v_k$, $\tilde v_k$, and $\pi_k$ solve the system
 		\[
 		\partial_t v_k -\Delta v_k +\tilde v_k\cdot\nabla v_k+\nabla \pi_k=0\qquad (x,t)\in \R^3\times [0,T],
 		\]
 		for the initial data $v_0^{(k)}$ and both $v_k$ and $\tilde v_k$ converge to $v_0^{(k)}$ in $L^2_{\loc}$.
 		\item  $v_k$ and $\tilde v_k$ are uniformly bounded in ${L^\I(0,T;L^2(B_1))}\cap  {L^2(0,T;H^1(B_1))}$ over all $k\in \N$.
 		\item for all $0<t\leq T$, $\pi_k$ satisfies the formula
 		\EQ{\label{eq.pressure2}
 			\pi_k(x,t) =&  -\frac 1 3 {[\tilde v_k \cdot v_k](x,t)} 
 			\\
 			&+ \lim_{\delta \to 0} \int_{|y|>\delta} K_{ij}(x-y)(\tilde v_k)_i(y,t) (v_k)_j(y,t)\,dy.
 		}
 		\item  there exists a $\la$-DSS solution $v$ in ${L^\I(0,T;L^2(B_1))}\cap  {L^2(0,T;H^1(B_1))}$ with pressure $\pi$ in $L^{3/2}(0,T;L^{3/2})$ so that
 		\begin{align*}
 		&v_k \text{ and } \tilde v_k \to v \mbox{ weakly in } L^2(0,T;H^1(B_1))
 		\\& v_k \text{ and } \tilde v_k \to v \mbox{ in } L^2(0,T;L^2(B_1)) 
 		\\& \pi_k \to \pi \text{ weakly in } L^{3/2}(0,T;L^{3/2}(B_1)).
 		\end{align*}
 	\end{itemize} 
 	Then, for a.e.~$0<t\leq T$ and $x\in B_{\la}$, the pressure 
 	$\pi$ satisfies the formula
 	\EQ{\label{eq.pressure3}
 		\pi(x,t) =&  -\frac 1 3 {|v|^2(x,t)} 
 		\\
 		&+ \lim_{\delta \to 0} \int_{|y|>\delta} K_{ij}(x-y)( v)_i(y,t) (v)_j(y,t)\,dy,
 	}
	in $L^{3/2}((0,T)\times B_\la)$.  
 \end{lemma}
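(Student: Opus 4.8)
The plan is to pass to the limit $k\to\I$ in the identity \eqref{eq.pressure2}, which holds for every $k$ and a.e.\ $t$, and to identify the resulting expression with the right-hand side of \eqref{eq.pressure3}. Since $\pi_k$ equals the right side of \eqref{eq.pressure2} and $\pi_k\to\pi$ weakly in $L^{3/2}((0,T)\times B_\la)$, it suffices to show that each term on the right of \eqref{eq.pressure2} converges in $L^{3/2}((0,T)\times B_\la)$ to the corresponding term of \eqref{eq.pressure3}; uniqueness of the weak limit then forces the stated formula for a.e.\ $t$ and a.e.\ $x\in B_\la$.

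First I would transfer the hypotheses, stated on $B_1$, to an arbitrary fixed ball $B_M$. The $\la$-DSS relation $v_k(x,t)=\la^{-1}v_k(x/\la,\la^{-2}t)$ and a change of variables show that the uniform bounds in $L^\I(0,T;L^2(B_1))\cap L^2(0,T;H^1(B_1))$, the strong convergence $v_k,\tilde v_k\to v$ in $L^2((0,T)\times B_1)$, and the weak convergence of $\pi_k$ all transfer to $B_M$, at the harmless expense of shrinking the time interval. The same computation on the dyadic annulus $A_j=\{\la^j\le|y|<\la^{j+1}\}$ yields the annular bound
\[
\norm{v_k(\cdot,t)}_{L^2(A_j)}^2=\la^{j}\norm{v_k(\cdot,\la^{-2j}t)}_{L^2(A_0)}^2\lec \la^{j},
\]
uniformly in $k$ and $t\in(0,T)$, with the same estimate for $\tilde v_k$ and $v$; this quantifies the growth at infinity of these $L^2_{\loc}$ DSS fields. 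Finally, interpolating the $L^\I_tL^2\cap L^2_tH^1$ bounds by Gagliardo-Nirenberg gives uniform $L^{10/3}((0,T)\times B_M)$ bounds, and interpolating these against the strong $L^2$ convergence yields $v_k,\tilde v_k\to v$ strongly in $L^3((0,T)\times B_M)$ for every $M$.

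The local term is then immediate: by H\"older, $v_k,\tilde v_k\to v$ in $L^3$ gives $-\tfrac13[\tilde v_k\cdot v_k]\to-\tfrac13|v|^2$ strongly in $L^{3/2}((0,T)\times B_\la)$. For the singular integral I fix a smooth cutoff $\chi$ with $\chi\equiv1$ on $B_{2\la}$ and $\supp\chi\subset B_{4\la}$ and split the integration against $\chi$ and $1-\chi$. The near piece is the Calderon-Zygmund principal-value integral of the compactly supported product $\chi(\tilde v_k)_i(v_k)_j$; since these converge to $\chi\,v_iv_j$ strongly in $L^{3/2}$ and the operator with kernel $K_{ij}$ is bounded on $L^{3/2}((0,T)\times\R^3)$, the near piece converges strongly in $L^{3/2}((0,T)\times B_\la)$ to the near piece for $v$. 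On the far piece the kernel is smooth for $x\in B_\la$, and I estimate it annulus by annulus: for $y\in A_j$ and $x\in B_\la$ one has $|K(x-y)|\lec\la^{-3j}$, so by Cauchy-Schwarz and the annular bound the contribution of $A_j$ is $\lec\la^{-3j}\norm{\,\cdot\,}_{L^2(A_j)}^2\lec\la^{-2j}$, uniformly in $k$.

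The main obstacle is precisely this far field. Because the fields are merely $L^2_{\loc}$ and grow like $\la^{j/2}$ on $A_j$, the object is not a Calderon-Zygmund integral of a globally $L^p$ function (cf.\ Comment 2 after Theorem \ref{thrm.main}), and the convergence of the products on the distant annuli is not uniform in $j$. The remedy is the uniform majorant just obtained: for each fixed $j$ the product $(\tilde v_k)_i(v_k)_j\to v_iv_j$ in $L^{3/2}((0,T)\times A_j)$, while the $A_j$-contribution is dominated by $C\la^{-2j}$ independently of $k$; dominated convergence in the summation index $j$ then gives convergence of the far piece in $L^{3/2}((0,T)\times B_\la)$ and, simultaneously, shows that the defining integral is absolutely convergent and independent of the regularization, so that the $\delta\to0$ limit in \eqref{eq.pressure2}--\eqref{eq.pressure3} is well defined in $L^{3/2}_{\loc}$. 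Combining the three convergences with uniqueness of the weak limit of $\pi_k$ completes the identification.
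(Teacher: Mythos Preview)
Your proof is correct and follows essentially the same route as the paper's: upgrade to $L^3$ convergence on larger balls via interpolation and DSS rescaling, split the pressure into the local term, a near Calder\'on--Zygmund piece, and a far piece, and control the far piece through an annular decomposition exploiting DSS scaling. The only cosmetic differences are that the paper uses a sharp cutoff at $|y|=\la^2$ instead of your smooth $\chi$, and for the far field the paper applies a H\"older trick together with a change of variables to bound the difference directly by $\|h\|_{L^{3/2}((0,T)\times B_1)}$ (the annular contributions summing geometrically in $\la^{-k/2}$), whereas you invoke dominated convergence in the annulus index---both arguments yield the same conclusion.
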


\begin{remark}
The purpose of this lemma is to establish the pressure formula \eqref{eq.pressure3} which, ultimately, will allow us to prove \eqref{eq.pressure0}.  It is, however, not needed to establish the other conclusions of Theorem \ref{thrm.main}.
\end{remark}

 \begin{proof}
 	Note that since $v_k$, $\tilde v_k$, and $v$ are all uniformly bounded in ${L^\I(0,T;L^2(B_1))}\cap  {L^2(0,T;H^1(B_1))}$,   convergence in $L^2(0,T;L^2(B_1))$, H\"older's inequality, Sobolev embedding, using the equation to get uniform bound of $\pd_t v_k$, and re-scaling the solution, implies that 
 	\[
 	v_k \text{ and } \tilde v_k \to v \mbox{ in } L^3(0,T;L^3(B_1)).
 	\] 
	It also shows that $v_k$, $\tilde v_k$, and $v$ are all uniformly bounded in $L^3(0,T;L^3(B_1))$ (at least for $k$ sufficiently large).
 	
 	Let
 	\[ 
 	\pi_k^1(x,t) =  -\frac 1 3 {[\tilde v_k \cdot v_k](x,t)}, 
 	\]
 	\[
 	\pi_k^2(x,t) =  \lim_{\delta \to 0} \int_{\la^2>|y|>\delta} K_{ij}(x-y)(\tilde v_k)_i(y,t) (v_k)_j(y,t)\,dy,
 	\]
 	and 
 	\[
 	\pi_k^3(x,t)=\int_{y\geq \la^2} K_{ij}(x-y)(\tilde v_k)_i(y,t) (v_k)_j(y,t)\,dy.
 	\]
 	Also let 
 	\[ 
 	\pi^1(x,t) =  -\frac 1 3 {|v|^2(x,t)}, 
 	\]
 	\[
 	\pi^2(x,t) =  \lim_{\delta \to 0} \int_{\la^2>|y|>\delta} K_{ij}(x-y)v_i(y,t) v_j(y,t)\,dy,
 	\]
 	and 
 	\[
 	\pi^3(x,t)=\int_{y\geq \la^2} K_{ij}(x-y)v_i(y,t) v_j(y,t)\,dy.
 	\]
 	Since $v_k \text{ and } \tilde v_k \to v \mbox{ in } L^3(0,T;L^3(B_\la))$, we have $\pi_k^1\to \pi^1$ in $L^{3/2}(0,T;L^{3/2}(B_\la))$.  
	
	Let 
 	\[
{ 	h_{i,j}(y,t)= (\tilde v_k)_i ( v_k)_j-v_i v_j  =  \bket{ (\tilde v_k)_i [(v_k)_j-v_j  ]  + [(\tilde v_k)_i -v_i  ] v_j } (y,t).}
 	\]
	Using the Calderon-Zygmund theory we clearly have
 	\begin{align}
 		&\int_0^T \int_{B_\la} |\pi_k^2(x,t) - \pi^2(x,t)|^{3/2}\,dx\,dt \nonumber
 		\\&\leq C \int_0^T\int_{B_{\la^2}} |h_{i,j}(x,t)|^{3/2}\,dx\,dt \nonumber
 		\\&\leq C \bigg(  \int_0^T\int_{B_{\la^2}}		\tilde v_k^3		\,dx\,dt\bigg)^{1/2}\bigg( \int_0^T\int_{B_{\la^2}}   (v_k-v)^3	\,dx\,dt\bigg)^{1/2} \nonumber
 		\\ &\quad + C\bigg(  \int_0^T\int_{B_{\la^2}}		v^3		\,dx\,dt\bigg)^{1/2}\bigg( \int_0^T\int_{B_{\la^2}} (\tilde v_k - v )^3   	\,dx\,dt\bigg)^{1/2}.
		\label{pidiff.est}
 	\end{align}
 	Re-scaling gives
 	\[
 				\int_0^T\int_{B_{\la^2}} (\tilde v_k - v )^3(x,t)   	\,dx\,dt = \la^4 \int_0^{T\la^{-4}}\int_{B_1} (\tilde v_k - v )^3(z,\tau)   	\,dz\,d\tau,
 	\]
 	for the obvious choice of $z$ and $\tau$. Since the right hand side of the above equation vanishes as $k\to \I$, as does the identical term but with $\tilde v_k$ replaced by $v_k$, we conclude that $\pi_k^2$ converges to $\pi^2$ in  $L^{3/2}(0,T;L^{3/2}(B_1))$.

 	Establishing the convergence of $\pi_k^3$ to $\pi^3$ is more difficult. Let
 	\[
 	p_k(x,t)=\pi^3_k(x,t)-\pi^3(x,t)
 	=     \int_{|y|\geq \la^2} K_{ij}(x-y) h_{i,j} (y,t) \,dy.
 	\]
 	Fix $x\in B_\la$.  Then
 	\begin{align*}
 		|p_k(x,t)|^{3/2}&\leq C \bigg| \int_{|y|\geq \la^2} \frac 1 {{|y|^3}}	|h_{i,j}(y,t)| \,dy	\bigg|^{3/2}
 		\\&\leq C \bigg(  \int_{|y|\geq \la^2}    \frac 1 {|y|^4}\,dy \bigg)^{1/2}     \int_{|y|\geq \la^2}  \frac 1 {|y|^{5/2}} 	|h_{i,j}(y,t)|^{3/2} \,dy 
 		\\ &= C   \int_{|y|\geq \la^2}  \frac 1 {|y|^{5/2}} 	|h_{i,j}(y,t)|^{3/2} \,dy.
 	\end{align*}
 	Let $A_k = \{x:\la^{k-1}\leq |x|<\la^k\}$ {for $k\in \Z$.}  Then, using the scaling properties of $h$, 
 	\begin{align*}
 	 \int_{|y|\geq \la^2}  \frac 1 {|y|^{5/2}} 	|h_{i,j}(y,t)|^{3/2} \,dy&=  \sum_{k=3}^\I \int_{A_k} \frac 1 {|y|^{5/2}} |h_{i,j}(y,t)|^{3/2} \,dy
 	 \\&\leq C(\la) \sum_{k=3}^\I \frac 1 {\la^{5k/2}}  \int_{A_k} |h_{i,j}(y,t)|^{3/2}\,dy
 	 \\&\leq C(\la) \sum_{k=3}^\I \frac 1 {\la^{5k/2}}  \int_{B_1} |h_{i,j}(z,t\la^{-2k})|^{3/2}\,dz.
 	\end{align*}
	Thus,
	\begin{align*}
	\int_0^T \int_{B_{\la}} |p_k(x,t)|^{3/2}\,dt
	&\leq \la^3 C(\la) \int_0^T \sum_{k=3}^\I \frac 1 {\la^{5k/2}}  \int_{B_1} |h_{i,j}(z,t\la^{-2k})|^{3/2}\,dz \,dt
	\\& \leq  C(\la) {\sum_{k=3}^\I \frac 1 {\la^{ k/2}}\int_0^{T\la^{-2k}}}\! \int_{B_1} |h_{i,j}(z,\tau)|^{3/2}\,dz \,d\tau
	\\&\leq  C(\la) \int_0^{T} \int_{B_1} |h_{i,j}(z,\tau)|^{3/2}\,dz \,d\tau.
	\end{align*}
{Thus this term is bounded as \eqref{pidiff.est}.  	
}

We have now shown that
 	$\pi_k(x,t)  $ converges weakly to both  $\pi^1(x,t)+\pi^2(x,t)+\pi^3(x,t)$ and $\pi(x,t)$ in $L^{3/2}(0,T;L^{3/2}(B_\la))$, implying that $\pi(x,t)=\pi^1(x,t)+\pi^2(x,t)+\pi^3(x,t)$ as distributions.   In other words,
{$\pi(x,t)$ satisfies \eqref{eq.pressure3}}
in
$L^{3/2}((0,T)\times B_\la)$.   
 \end{proof}

\section{Properties of DSS solutions with data in $L^3_w$}\label{sec.bound}

The goal of this section is to obtain a bound on the local evolution of DSS solutions $v$ constructed in \cite{BT1} that is independent of both the $L^3_w$ and $L^2_{\uloc}$ norms of $v$ and to establish an explicit representation formula for the pressure.  

 %
Assume $v_0\in L^3_w(\R^3)$ and $v$ is {a} DSS solution evolving from $v_0$ as constructed in \cite{BT1}.  For a generic solution to \eqref{eq.NSE}, we cannot close energy estimates for $\phi v$ solely in terms of $v_0|_{B_\la}$ -- there is always some spillover.  
Proposition \ref{lemma.energy} states that this is possible for DSS solutions as a result of their scaling properties.   In our argument, we must work with a quantity that is continuous in time.  This is not known for $\int_{B_1} |v(t)|^2\,dx$ when $v$ is a local Leray solution.  Hence, we need to work at the level of a \emph{mollified approximation scheme} \cite[(2.24)]{BT1},  see \eqref{mollified1} below.  Note that in \cite{BT1}, the mollified scheme is used to approximate a solution to the time-periodic Leray equations and the mollification is time-independent.  Undoing the similarity transformation results in a time-dependent mollification of the drift component of the nonlinear term of the solution in the physical variables,  see \eqref{mollified2} below; this matches the mollification used in \cite{Chae-Wolf}.

%

\begin{proposition}\label{lemma.energy}
Fix $\la>1$. 
Assume $v_0\in L^3_w(\R^3)$ is  $\la$-DSS and divergence-free,
and $v$ is {a} $\la$-DSS {local Leray} solution evolving from $v_0$ constructed in \cite{BT1} (in particular, it is the limit of the mollified approximation scheme \cite[(2.24)]{BT1}) and $\pi$ is its associated pressure. 
Let $\al_0=\| v_0\|^2_{L^2(B_\la)}$.
 Then, there exist positive $T=T(\al_0,\la)$ and $C(\al_0,\la)$ independent of $\norm{v_0}_{L^2_\uloc}$ and $\norm{v_0}_{L^3_w}$
 so that
\EQ{ \label{prop3.1-1}
	\esssup_{0\leq t\leq T} \int_{B_1} |v(x,t)|^2\,dx + \int_0^T\int_{B_1} |\nabla v|^2\,dx\,dt <C(\al_0,\la),
}
and 
\EQ{ \label{prop3.1-2}
\int_0^T \int_{B_1}| \pi(x,t)|^{3/2}\,dx\,dt  <C(\al_0,\la).
}
Moreover, for $x\in B_1$ and $t\in (0,T)$, the pressure satisfies the formula  
\EQ{ \label{prop3.1-3}
	\pi(x,t) =&  -\frac 1 3 {|v|^2(x,t)} 
	\\
	&+ \lim_{\delta \to 0} \int_{|y|>\delta} K_{ij}(x-y)v_i(y,t) v_j(y,t)\,dy,
}
in $L^{3/2} (B_1 \times (0,T))$.
\end{proposition}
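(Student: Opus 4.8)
The plan is to run the entire argument at the level of the mollified approximation scheme of \cite{BT1}, for which the approximate velocities $v_m$ (with divergence-free drift $\tilde v_m$ equal to a time-dependent, $\la$-DSS mollification of $v_m$) are regular enough that $t\mapsto \int_{B_1}|v_m(t)|^2\,dx$ is continuous, and then to pass all bounds to $v$ by lower semicontinuity together with Lemma \ref{lemma.pressure}. The core new input is that every ``spillover'' quantity living on $B_\la\setminus B_1$ or on $\R^3\setminus B_1$ can be re-expressed, through the $\la$-DSS scaling, as the same quantity on $B_1$ at a strictly earlier time; this is precisely what decouples the estimate from $\norm{v_0}_{L^2_\uloc}$ and $\norm{v_0}_{L^3_w}$. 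Writing $v(x,t)=\la v(\la x,\la^2 t)$ gives, for any $t>0$, the two identities
\[
\int_{B_\la}|v(t)|^2\,dx=\la\int_{B_1}|v(\la^{-2}t)|^2\,dx,\qquad \int_{B_\la}|v(t)|^3\,dx=\int_{B_1}|v(\la^{-2}t)|^3\,dx,
\]
and, more generally, the annulus $A_k=\{\la^{k-1}\le|y|<\la^k\}$ transfers to $B_1$ at time $\la^{-2k}t$ with explicit $\la$-powers (the bookkeeping already used for $\pi^3_k$ in the proof of Lemma \ref{lemma.pressure}). The fact that $\la^{-2}t\le t$ makes the feedback causal, which is what permits a short-time closure.

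Next I would set up the local energy estimate. Testing the smooth mollified system with $\phi v_m$, where $\phi\in C_0^\infty(B_\la)$ satisfies $\phi\equiv 1$ on $B_1$, and using $\nabla\cdot\tilde v_m=0$, yields
\[
\int_{B_1}|v_m(t)|^2 + 2\int_0^t\!\!\int_{B_1}|\nabla v_m|^2 \le \int_{B_\la}|v_0|^2\phi + \int_0^t\!\!\int |v_m|^2\Delta\phi + \int_0^t\!\!\int (|v_m|^2+2\pi_m)\,\tilde v_m\cdot\nabla\phi .
\]
All three error terms are supported in the annulus $B_\la\setminus B_1$. The linear term is controlled by $\int_0^t\int_{B_\la}|v_m|^2=\la^3\int_0^{\la^{-2}t}\int_{B_1}|v_m|^2$ by the first identity; the cubic term by $\int_0^t\int_{B_\la}|v_m|^3=\la^2\int_0^{\la^{-2}t}\int_{B_1}|v_m|^3$ by the second. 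Ladyzhenskaya's inequality $\|v_m\|_{L^3(B_1)}^3\lesssim \|v_m\|_{L^2(B_1)}^{3/2}(\|\nabla v_m\|_{L^2(B_1)}+\|v_m\|_{L^2(B_1)})^{3/2}$ followed by H\"older in time turns the cubic term into $T^{1/4}$ times a power of the local energy, so a share of the dissipation can be absorbed on the left.

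For the pressure spillover I would first establish, for $\pi_m$, the representation \eqref{eq.pressure2}: the Poisson relation $-\Delta\pi_m=\partial_i\partial_j(\tilde v_{m,i}v_{m,j})$ supplies the near-field Calder\'on--Zygmund part on $B_{\la^2}$, while the far field is assigned meaning through the $\la$-DSS structure exactly as in Lemma \ref{lemma.pressure}. I would then bound $\int_0^T\int_{B_1}|\pi_m|^{3/2}\le C(\la)\int_0^T\int_{B_1}|v_m|^3$: the near field uses $L^{3/2}$-boundedness of singular integrals together with the second scaling identity, and the far field uses the geometric sum $\sum_{k\ge 3}\la^{-k/2}$ from the $p_k$-estimate of Lemma \ref{lemma.pressure}. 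Plugging this into the pressure error term via H\"older, $\int_0^t\int_{B_\la}|\pi_m|\,|v_m|\le \|\pi_m\|_{L^{3/2}((0,t)\times B_\la)}\|v_m\|_{L^3((0,t)\times B_\la)}\lesssim \int_0^t\int_{B_\la}|v_m|^3$, again produces a factor that is small for small $T$ after the same $L^3$ interpolation. This representation, once passed to the limit through Lemma \ref{lemma.pressure}, gives \eqref{prop3.1-3}, and its $L^{3/2}$ bound gives \eqref{prop3.1-2}.

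Collecting these estimates and setting $Y_m(t)=\esssup_{[0,t]}\int_{B_1}|v_m|^2 + \int_0^t\int_{B_1}|\nabla v_m|^2$, I would reach an inequality of the schematic form $Y_m(t)\le \al_0 + C(\la)\,t\,Y_m(t) + C(\la)\,T^{1/4}\,Y_m(t)^{3/2}$, the first-order term coming from the linear spillover and the super-linear terms from the cubic and pressure spillover. Since $v_m$ is continuous in time with $Y_m(0)\le\al_0$, a standard barrier/continuity argument selects $T=T(\al_0,\la)>0$ and $C(\al_0,\la)$, both independent of $m$, $\norm{v_0}_{L^3_w}$ and $\norm{v_0}_{L^2_\uloc}$, for which $Y_m(T)\le C(\al_0,\la)$; this is \eqref{prop3.1-1}, and \eqref{prop3.1-2} then follows from the pressure estimate, after which $m\to\infty$ and Lemma \ref{lemma.pressure} deliver \eqref{prop3.1-3}. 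The main obstacle is precisely the simultaneous, causal closure of the energy and pressure estimates: the pressure error is nonlocal, so it cannot be bounded at all without first proving the DSS pressure representation and its geometric far-field summation, and the cubic and pressure terms are super-linear in $Y_m$, so the final bound can be made to depend on $\al_0$ and $\la$ alone only by exploiting both the short time $T$ and the causal $\la^{-2}t<t$ structure built into the DSS scaling.
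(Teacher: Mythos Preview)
Your proposal is correct and follows essentially the same route as the paper: work at the mollified level where $t\mapsto\int_{B_1}|v_\e(t)|^2$ is continuous, use the $\la$-DSS scaling identities to recast all spillover on $B_\la$ (and annuli $A_k$) as quantities on $B_1$ at earlier times, split the pressure into a local Calder\'on--Zygmund part and a geometrically summable far-field part, close by a continuity/barrier argument depending only on $\al_0$ and $\la$, and finish with lower semicontinuity and Lemma~\ref{lemma.pressure}. The paper supplies two details you glossed over: the mollified drift $\tilde v_m=\eta_{\e\sqrt t}*v_\e$ on $B_\la$ is controlled by $v_\e$ on $B_{\la^2}$ via the small support of $\eta_{\e\sqrt t}$ (inequality \eqref{ineq.mollifier}), and the identification of the $\pi_\e$ appearing in the mollified system with the DSS pressure formula is justified by subtracting $V_0=e^{t\Delta}v_0$ and invoking uniqueness for the linear Stokes problem with forcing in $L^2H^{-1}$.
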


{Typically, the best pressure decompositions we have for local Leray solutions depend on a particular ball containing the spatial point at which the pressure is being computed.  The resulting formula consists of a local Calderon-Zygmund part and a far-field part with a singular kernel that is decaying faster than $K$ kernel.  The formula \eqref{prop3.1-3} does not involve such a decomposition, and, as is evident in the proof, the integral in \eqref{prop3.1-3} is defined using the DSS property.  
}

The proof of \cite{BT1} shows that the left sides of \eqref{prop3.1-1} and \eqref{prop3.1-2} are bounded by constants depending on $v_0$, in particular its $L^3_w(\R^3)$-norm. For this application, we need a bound depending only on $\norm{v_0}_{L^{2}(B_\la)}$ and $\la$.

\begin{proof} 
Since $v$ is a solution from \cite{BT1}, its image under the similarity transform \eqref{variables} solves the time-periodic Leray equations and is the limit of a mollified approximation scheme \cite[(2.24)]{BT1}.  In particular, for each $\e>0$, there exists a time periodic solution $u_\e$ to the problem
\begin{equation}\label{mollified1}
	\bke{\partial_s u_\e  -\Delta u_\e -\frac 1 2 u_\e -\frac 1 2 y\cdot\nabla u_\e  + (\eta_\e * u_\e)\cdot\nabla u_\e  +\nabla p_\e} (y,s)= 0,
\end{equation}
where $\eta_\e (y)=\frac 1 {\e^3}\eta(y/\e)$ and $\eta\in C_0^\I(\R^3)$, is non-negative, and satisfies $\int \eta(y) \,dy =1$.  Applying \eqref{ansatz2}-\eqref{variables} we obtain a $\la$-DSS vector field $v_\e$ satisfying
\begin{equation}\label{mollified2}
	\partial_t v_\e(x,t) -\Delta v_\e(x,t)+(\eta_{\e\sqrt{t}} * v_\e) \cdot \nabla v_\e (x,t)+\nabla \pi_\e (x,t)= 0.
\end{equation}
Note the time dependence of the convolution kernel $\eta_{\e\sqrt{t}} $ in \eqref{mollified2}.

By the convergence properties of $u_\e(y,s)$ to $u(y,s)=\sqrt{t} v (x,t)$ \cite[p.\,1108]{BT1} and discretely self-similar scaling (to extend the estimates down to $t=0$), it follows that for all $T>0$ {and all compact sets  $K\subset \R^3$,}
\begin{align*}
&v_\e \to v \mbox{ weakly in } L^2(0,T;H^1 {(K)}),
\\& v_\e \to v \mbox{ strongly in } L^2(0,T;L^2(K)) ,
\\& v_\e(s)\to v(s) \mbox{ weakly in } L^2 {(K)}  \mbox{ for all }s\in [0,T].
\end{align*}
Note also that $v_\e(t)\to v_0$ in $L^2_{\loc}$, i.e.\,the mollification does not {affect} the initial data.
Furthermore, {because} each $v_\e$ is smooth on $\R^3\times (0,\I)$ and right continuous in $L^2_{\loc}$ at $t=0$,  it follows that 
\[\al_\e(t)= \int_{B_1}|v_\e(x,t)|^2\,dx,\] 
and
\[
\tilde \al_\e (t)=\sup_{0\leq \tau\leq t}\al_\e(\tau)
\]
are continuous as functions of $t$. This is not clearly true for $\int_{B_1} |v(x,t)|^2\,dx$. 

 {Note that, for any $k \in \Z$ and $q\in [1,\infty)$, since $v_\e(x,t)  = \la^{-k} v_\e(\la^{-k}x , \la^{-2k} t)$,
\EQ{\label{eq3.6}
 \int_{B_{\la^k} }|v_\e(x,t)|^q\,dx = \la^{(3-q)k} \int_{B_{1}} |v_\e(\td x,\la^{-2k} t)|^q\,d\td x .
}
}

Our goal is to establish local in time a priori bounds for $\al_\e(t)$ that are independent of $\e$.
Note that $v_\e$ satisfies the local energy equality, i.e.,
\begin{align}
\notag &\int |v_\e|^2\phi(t) \,dx +2\int_0^t \int |\nabla v_\e|^2\phi\,dx\,ds
\\ \label{ineq.local2}&=
\int |v_0|^2\phi \,dx+ \int_0^t\int |v_\e|^2 (\partial_s\phi +\Delta\phi) \,dx\,ds 
 \\ \notag&+\int_0^t\int (|v_\e|^2 ((\eta_{{\e\sqrt{s}}} * v_\e) \cdot \nabla \phi)\,dx\,ds
+\int_0^t\int2\pi_\e(v_\e\cdot \nabla\phi)\,dx\,ds,
\end{align}
for any non-negative $\phi\in C^\I_0(\R^3\times [0,\I))$. 
 {Fix $\chi \in C^\infty(\R)$ with $\chi(t)=1$ if $t \le 1$ and $\chi(t)=0$ if $t\ge \la$. We now fix $\phi$ in \eqref{ineq.local2} as
\[
\phi(x,t) = \chi^2(|x|) \cdot \chi(t).
\] 
 
We will estimate the terms on the right hand side of \eqref{ineq.local2}  for $0< t\le 1$, and we can treat $\phi$ as $t$-independent from now on.}
The first term is bounded by $\al_0$.
For the second, using the scaling properties \eqref{eq3.6} of $v_\e$, we have
\begin{align*}
\int_0^t\int |v_\e|^2(\partial_s\phi + \Delta \phi )\,dx\,ds&\leq C  \int_0^t \int_{B_\la} |v_\e|^2\,dx\,ds
\\&\leq  C  {\la^3} \int_0^{t/\la^2}\int_{B_1} |v_\e|^2\,dx\,ds
\\&\leq C( \la) \int_{0}^t \tilde \al_\e(s)\,ds.
\end{align*}
For the cubic term, we  begin by using Young's inequality to obtain
\begin{align*} 
\int_0^t\int |v_\e|^2((\eta_{\e\sqrt{s}}*v_\e)\cdot \nabla\phi)\,dx\,ds
&\leq C  \int_0^t\int_{B_\la} |v_\e|^3 \,dx\,ds 
\\&+ C  \int_0^t\int_{B_\la}   |(\eta_{\e\sqrt{s}}*v_\e) |^3\,dx\,ds.
\end{align*}
Re-scaling the non-mollified term and making the obvious change  of variables results in the estimate
\begin{align*}
\int_0^t\int_{B_\la} |v_\e|^3 \,dx\,ds \leq  C( \la)\int_0^{t/\la^2}\int_{B_1} |v_\e|^3\,dy\,d\tau \leq C ( \la)\int_0^t\int |v_\e|^3\phi^{3/2} \,dx\,ds.
\end{align*}
For the term involving the mollifier, note that $\eta\in C_0^\I$ and $\supp \eta \subset B_\rho$ for some $\rho>0$.  By taking $\e$ sufficiently small we can ensure that, whenever $s<1$, $\supp \eta_{\e\sqrt{s}}\subset  {B_{\la-1}}$.   Note $\la^k + (\la-1) \le \la^{k+1}$ for all $k \ge 0$.
Thus, for $x \in B_\la$, 
\begin{align*}
|	 (\eta_{\e\sqrt{s}} * v_\e ) (x,s) |&\leq \int \eta_{\e\sqrt{s}}(y) |v_\e(x-y,s) |\,dy
\\&=\int \eta_{\e\sqrt{s}}(y) |v_\e(x-y,s) | \chi_{B_{\la^2}} (x-y)\,dy
\\&=(\eta_{\e\sqrt{s}} *(\chi_{B_{\la^2}}| v_\e |))(x,s),
\end{align*}
whenever $\e$ is sufficiently small and $s< 1$. Therefore, under the same assumptions and after re-scaling we see that, for any $1<q<\I$,
\begin{align}\label{ineq.mollifier}
\|	 (\eta_{\e\sqrt{s}} * v_\e ) (s) \|_{L^q(B_{\la})}\leq C( {q,}\eta) \|v_\e(s)\|_{L^q(B_{\la^2})} \leq C({q,}\eta,\la) \|v_\e {(\la^{-4}s)}\|_{L^q(B_1)},
\end{align} 
where $C$ is independent of $s$ and $\e$.   Note that this estimate is also valid if $B_\la$ is replaced by $B_{\la^2}$ but with a different choice of constants, smallness condition on $\e$, and right hand side determined at time $\la^{-6}s$.    

Using standard inequalities and \eqref{ineq.mollifier} with $q=3$  thus leads to the estimate
\begin{align}
\label{ineq.cubic}\int_0^t\int |v_\e|^2((\eta_{\e\sqrt{s}}*v_\e)\cdot \nabla\phi)\,dx\,ds \leq C (\eta,\la) \int_0^t\int |v_\e|^3\phi^{3/2} \,dx\,ds.
\end{align}
By the Gagliardo-Nirenberg inequality and re-scaling \eqref{eq3.6}, we have, for any $s>0$, that 
\EQN{
\|\phi^{1/2} v_\e(s)\|_{L^3}&\leq C \|\nabla \otimes (\phi^{1/2} v_\e)\|_{L^2}^{1/2} \|\phi^{1/2} v_\e\|_{L^2}^{1/2}(s)
\\& \leq  C( \la)\big(  \tilde \al_\e(s)^{1/2} + \|\phi^{1/2} \nabla v_\e (s) \|_{L^2} \big)^{1/2}\big(  \tilde \al_\e(s)  \big)^{1/4}.
} 
Hence, for any $\ga>0$, 
\[
\|\phi^{1/2} v_\e(s)\|_{L^3}^3\leq C(\la) \big(\ga^{-3}\tilde \al_\e(s)^{3} +  \ga \tilde \al_\e(s)^{{1}} +\ga \|\phi^{1/2} \nabla v_\e (s) \|_2^2    \big).
\]
Thus,
\EQ{
\label{ve3est}
 \int_0^t\int |v_\e|^2((\eta_{\e\sqrt{s}}*v_\e)\cdot \nabla\phi)\,dx\,ds
 &\leq C( \la,\ga,\eta) \int_0^t \big( \tilde \al_\e(s)^3+\tilde \al_\e(s)^1 \big)\,ds 
 \\&+ C( \la) \ga\int_0^t\int |\nabla v_\e|^2\phi\,dx\,ds.
}
Provided $\ga$ is small enough, the gradient term can be absorbed into the left hand side of \eqref{ineq.local2}.

We next estimate the pressure term.  
For this we need a formula for the pressure which we presently justify.
Let $w_\e=v_\e-V_0$ where $V_0(x,t)=e^{t\Delta}v_0$. We have
\[
\partial_t{w_\e}-\Delta w_\e +\nabla \pi_\e  =g, \quad \div w_\e=0,
\]
where $g_i = -\pd_j G_{ji}$ with
\EQN{
G &=(\eta_{\e\sqrt t} *v_\e )\otimes v_\e
\\
& =  (\eta_{\e\sqrt t} *w_\e + \eta_{\e\sqrt t} *V_0 ) \otimes  (w_\e  +V_0).
}
 {For $0<t_1<t_2<\I$, we have 
\EQN{
V_0 &\in C([t_1,t_2];L^4(\R^3)\cap L^\I(\R^3)), \\
w_\e &\in L^\infty(t_1,t_2;L^2 (\R^3))\cap L^2 (t_1,t_2;L^6 (\R^3)) \subset L^4(t_1,t_2;L^3(\R^3)).
}
%
By Young's convolution inequality, 
\[
\norm{G}_{L^2(t_1,t_2;L^2)} \lec \norm{\eta_{\e\sqrt t}}_{L^\I(t_1,t_2; L^{6/5} \cap L^1)}
\bke{ \norm{w_\e}_{L^4(t_1,t_2;L^3(\R^3))} + \norm{V_0}_{L^4(\R^3\times [t_1,t_2])}}^2.
\]
}%
Since $g\in L^2([t_1,t_2];H^{-1})$,  \cite[Lemma A.2]{CKN} implies $w_\e  \in C([t_1,t_2];L^2)$ (after modification on a set of time of measure zero; since the modified vector field still satisfies the above system distributionally, this does not effect our argument).

Consider the following non-stationary Stokes system with forcing $g$
\[
\partial_t V-\Delta V  +\nabla P = g, \quad \div V=0,
\] 
with initial data $V_0=w_\e(t_1)\in L^2(\R^3)$. 
It is well known that if $g\in L^\I(t_1,t_2;H^{-1})$ and $V_0\in L^2$, then there exists a unique $V\in C_w([t_1,t_2];L^2(\R^3))\cap L^2([t_1,t_2];H^1(\R^3))$ and unique $\nabla P$ solving the above non-stationary stokes system (see \cite[p. 1107-1108]{BT1}). Letting  $V=w_\e$ and $P=\pi_\e$, this implies that $w_\e$ and $\nabla \pi_\e$ are unique. Up to a function $\pi_*(t)$ independent of $x$,
%
%
 \EQ{\label{eq.pressure}
 	\pi_\e(x,t) -\pi_*(t)=&  -\frac 1 3 {[(\eta_{\e\sqrt{t}}*v_\e)\cdot v_\e](x,t)} 
 	\\
 	&+ \lim_{\delta \to 0} \int_{|y|>\delta} K_{ij}(x-y)(\eta_{\e\sqrt{t}}*v_\e)_i(y,t) (v_\e)_j(y,t)\,dy,
 }
where $K_{ij}(x)=\partial_i\partial_j \frac 1 {4\pi |x|}$. The right side is defined in $L^2([t_1,t_2];L^2(\R^3))$.
Since the only appearance of $\pi_\e$ in \eqref{mollified2} is $\nabla \pi_\e$, we can re-define $\pi_\e$ to equal $\pi_\e-\pi_*(t)$ and, therefore, can drop $\pi_*(t)$ from \eqref{eq.pressure}.

 {The pressure $\pi_\e$ given by \eqref{eq.pressure} is already bounded in $L^2([t_1,t_2];L^2(\R^3))$ for any $0<t_1<t_2<\I$ but the bound depends on $t_1$, $t_2$ and $\e$. We now bound it in $L^{3/2}(0,T; L^{3/2}(B_\la))$.}
Bounding the first term from \eqref{eq.pressure} is simple given H\"older's inequality, \eqref{ineq.mollifier}, and \eqref{ineq.cubic}.  In particular, we have for any $\ga>0$
\begin{align*}
&\int_0^t	\|\frac 1 3 |(\eta_{\e\sqrt{s}}*v_\e)(\cdot,s)||v_\e(\cdot,s)| \|_{L^{3/2}(B_\la)}^{3/2} \,ds 
\\ &\qquad
\leq   C( \la,\ga,\eta) \int_0^t \big( \tilde \al_\e(s)^3+\tilde \al_\e(s)^1 \big)\,ds
 + \ga\int_0^t\int |\nabla v_\e|^2\phi\,dx\,ds.
\end{align*}
To bound the principal value integral in \eqref{eq.pressure}, we need to split the integral into  local and non-local parts as follows,
\begin{align*}
&\lim_{\delta \to 0} \int_{|y|>\de} K(x-y)(\eta_{\e\sqrt{t}}*v_\e)(y,t) v_\e(y,t)\,dy 
\\&=
\lim_{\delta \to 0} \int_{ B_{\la^2}\setminus {B_\delta}} K(x-y)(\eta_{\e\sqrt{t}}*v_\e)(y,t) v_\e(y,t) \chi_{B_{\la^2}}(y)\,dy 
\\&+    \int_{|y|>\la^2} K(x-y)(\eta_{\e\sqrt{t}}*v_\e)(y,t) v_\e(y,t)\,dy  
\\&=: \pi_{\mathrm{near}}(x,t) + \pi_{\mathrm{far}}(x,t).
\end{align*}
To bound $\pi_{\mathrm{near}}$ note that, by the Calderon-Zygmund theory,
\begin{align*}
\|\pi_{\mathrm{near}} (\cdot,t)\|_{L^{3/2}(B_\la)} &\leq   \| (\eta_{\e\sqrt{t}}*v_\e)(\cdot,t)  v_\e(\cdot,t)\|_{L^{3/2}(B_{\la^2})},
\end{align*}
and, arguing as above using \eqref{ineq.mollifier} but with $B_{\la^2}$ in place of $B_\la$ (see the note following \eqref{ineq.mollifier}), it follows that
\[
	\int_0^t\|\pi_{\mathrm{near}} (\cdot,s)\|_{L^{3/2}(B_\la)}^{3/2}	 \,ds \leq  C(\la,\ga,\eta) \int_0^t \big( \tilde \al_\e(s)^3+\tilde \al_\e(s)^1 \big)\,ds+ \ga\int_0^t\int |\nabla v_\e|^2\phi\,dx\,ds.
\]

Bounding the term $\pi_{\mathrm{far}}$ is more complicated.  Let $A_k=\{x: \la^{k-1}\leq |x|< \la^k  \}$. We start with the following pointwise estimate which is valid whenever $x\in B_{\la}$,
\begin{align*}
|\pi_{\mathrm{far}} (x,t)| &\leq C\sum_{k=3}^\I \int_{A_k} \frac 1 {|x-y|^3} |(\eta_{\e\sqrt{t}}*v_\e)(y,t)|\,|  v_\e(y,t)|\,dy
\\&\leq C(\la) \sum_{k=3}^\I \frac 1 {\la^{3k}}  \int_{A_k} |(\eta_{\e\sqrt{t}}*v_\e)(y,t)|\,|  v_\e(y,t)|\,dy
\\&=C(\la) \sum_{k=3}^\I \frac 1 {\la^{2k}}  \int_{A_0} |(\eta_{\e\sqrt{t\la^{-2k}}}*v_\e)(z,t\la^{-2k})|\,|  v_\e(z,t\la^{-2k})|\,dz
\\&\leq C(\la)\sum_{k=3}^\I \frac 1 {\la^{2k}}   \|(\eta_{\e\sqrt{t\la^{-2k}}}*v_\e)(t\la^{-2k})   \|_{L^2(B_1)}   \|  v_\e (t\la^{-2k})\|_{L^2(B_1)}
\\&\leq C(\la)\sum_{k=3}^\I \frac 1 {\la^{2k}}   \|  v_\e (t\la^{-2k})\|_{L^2(B_{\la^2})}^2
\\&\leq C(\la) \tilde \al_\e(t),
\end{align*}
where we have used \eqref{eq3.6}, \eqref{ineq.mollifier}  and re-scaled the solution. 
Therefore,
\[
	\int_0^t\|\pi_{\mathrm{far}} (\cdot,s)\|_{L^{3/2}(B_\la)}^{3/2}	 \,ds \leq  C(\la) \int_0^t  \tilde \al_\e(s)^{3/2}\,ds.
\]
After using H\"older's inequality, \eqref{ineq.cubic}, the above bounds, and  {$\al^{3/2} \le \al+\al^3$ for $\al>0$}, it is clear that
\begin{align*}
&\int_0^t\|\pi_\e (\cdot,s)\|_{L^{3/2}(B_\la)}^{3/2}	 \,ds 
+ \int_0^t \int 2\pi_\e (v_\e \cdot \nabla \phi) \,dx\,ds 
\\ &\qquad  \leq  C( \la,\ga,\eta) \int_0^t \big( \tilde \al_\e(s)^3+\tilde \al_\e(s)^1 \big)\,ds
+ \ga\int_0^t\int |\nabla v_\e|^2\phi\,dx\,ds.
\end{align*}


Combining the above estimates (and taking $\ga$ sufficiently small to absorb the gradient terms on the right hand side), we obtain
\begin{align}\label{ineq.alpha1}
\al_\e(t) +\int_0^t\int_{B_1}|\nabla v_\e|^2\,dx\,ds &\leq \al_0 +    C( \la,\eta,\ga) \int_0^t \big( \tilde \al_\e(s)^{3}  +\tilde \al_\e(s)^{1}  \big)\,ds.
\end{align}
Therefore,
\begin{align}\label{ineq.alpha2}
\tilde \al_\e(t)\leq \al_0 +    C  \int_0^t \big( \tilde \al_\e(s)^{3} +\tilde \al_\e(s)^{1}  \big)\,ds.
\end{align}
 By continuity of $\al_\e(t)$, we have
\begin{align}\label{ineq.alpha4}
\tilde \al_\e(t)\leq 2\al_0,\quad \forall t<T
\end{align}
for some $T>0$. By a continuity argument, we may take $T= (C(2 + 8\al_0^2))^{-1}$.

Letting $\e \to 0$ yields
\[
(v,\chi_{B_1} v) (t) \le \liminf_{\e \to 0} (v_\e,\chi_{B_1} v_\e)_{L^2} (t)\leq 2\al_0,
\]  
for all $t\leq T$.
Note that \eqref{ineq.alpha1} gives uniform (in $\e$) control of 
\[
\int_0^T\int_{B_1} |\nabla v_\e|^2\,dx\,dt \leq C(\al_0,\la)
\]
for some constant $C(\al_0,\la)$.
From \cite{BT1} we have that $v_\e$ converges {weakly} to $v$ in $L^2(\frac 1 k,T;H^1(B_1))$ for every $k\in \N$.  Hence, 
\[
\int_{1/k}^T\int_{B_1} |\nabla v |^2\,dx\,dt \leq \sup_{\e>0} \int_{0}^T\int_{B_1} |\nabla v_\e|^2\,dx\,dt,
\]
and, letting $k\to\I$, it follows that
\[
\int_{0}^T\int_{B_1} |\nabla v |^2\,dx\,dt
\leq C(\al_0,\la).
\]
Similarly, since $\pi_\e\in L^{3/2}(0,T; L^{3/2} (B_1))$ with uniformly bounded norms, it follows that $\pi\in L^{3/2}(0,T; L^{3/2} (B_1))$.
Applying Lemma \ref{lemma.pressure} yields the desired pressure representation in $L^{3/2}(0,T; L^{3/2} (B_1))$ and concludes the proof.
\end{proof}

\section{DSS solutions with data in $L^2_{\loc}(\R^3)$}\label{sec.construction}

In this section we prove Theorem \ref{thrm.main}.  To do this, we need to approximate DSS data in $L^2_{\loc}$ by divergence free DSS vector fields in $L^3_w$ and also characterize discrete self-similarity on $\R^3\times (0,\I)$ in terms of a neighborhood of the origin. 

\subsection{Approximation of DSS data in $L^2_{\loc}$}\label{sec.approx}

\begin{lemma}\label{lemma.approx}
Let $f\in L^2_{\loc}(\R^3;\R^3)$ be a given divergence free $\la$-DSS vector field for some $\la>0$.  There exists a sequence of divergence free $\la$-DSS vector fields $\phi^{(k)}$ so that $\phi^{(k)}\in L^3_w(\R^3)$ and $\|\phi^{(k)} -f\|_{L^2(B_1)} \to 0$ a $k\to \I$ ($B_1$ is the ball of radius $1$ centered at the origin).
\end{lemma}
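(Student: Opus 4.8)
The plan is to reduce the approximation to a single fundamental annulus, and then to produce the approximants by mollifying a discretely self-similar \emph{vector potential}, so that taking the curl automatically restores both the divergence-free and the DSS structure.

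First I would record a scaling reduction. After replacing $\la$ by $\la^{-1}$ if $0<\la<1$ (which does not change the DSS class), write $A=\{x:1\le |x|<\la\}$ for the fundamental annulus, so that $B_1\setminus\{0\}$ is, up to a null set, the disjoint union of the annuli $\la^{-m}A$, $m\ge 1$. For any $\la$-DSS field $g$ one has $g(x)=\la^{m}g(\la^{m}x)$ on $\la^{-m}A$, whence $\norm{g}_{L^2(\la^{-m}A)}^2=\la^{-m}\norm{g}_{L^2(A)}^2$ and
\[
\norm{g}_{L^2(B_1)}^2=\frac{1}{\la-1}\,\norm{g}_{L^2(A)}^2 .
\]
Since each difference $\phi^{(k)}-f$ is again $\la$-DSS, it suffices to produce $\la$-DSS, divergence free fields $\phi^{(k)}\in L^3_w$ with $\norm{\phi^{(k)}-f}_{L^2(A)}\to 0$. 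I would also record the membership criterion: if a $\la$-DSS field $\phi$ is bounded on the \emph{compact} shell $\bar A$, then $|\phi(x)|\le C|x|^{-1}$ everywhere by the DSS scaling, and since $|x|^{-1}\in L^{3,\infty}(\R^3)$ we get $\phi\in L^3_w$. Thus it is enough that each approximant be $\la$-DSS, divergence free, and bounded on $\bar A$.

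The construction I would use goes through a vector potential. Because $f$ is divergence free, its flux $\int_{|x|=r}f\cdot \hat n \,dS$ is independent of $r$, and the relation $f(x)=\la f(\la x)$ forces this flux to equal $\la^{-1}$ times itself, hence to vanish. On $\R^3\setminus\{0\}$, which has trivial first de Rham cohomology and whose second cohomology is detected precisely by this flux, a divergence free $L^2_{\loc}$ field with vanishing flux is a curl; moreover the symmetry can be imposed so that one obtains a $0$-DSS potential $a\in H^1_{\loc}(\R^3\setminus\{0\})$, i.e.\ $a(x)=a(\la x)$, with $\curl a=f$. I would then mollify $a$ in a scale-invariant way: passing to logarithmic-polar coordinates $x=e^{r}\omega$ turns $0$-DSS into periodicity in $r$ of period $\log\la$, so mollifying the Cartesian components of $a$ in $(r,\omega)$ on the compact cylinder $(\R/\log\la\,\Z)\times S^2$ produces smooth $0$-DSS fields $a_\e$ with $a_\e\to a$ in $H^1_{\loc}(\R^3\setminus\{0\})$. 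Setting $\phi_\e:=\curl a_\e$, the identity $\curl[a_\e(\la\cdot)]=\la\,(\curl a_\e)(\la\cdot)$ together with $a_\e(\la\cdot)=a_\e$ shows $\phi_\e$ is $\la$-DSS; $\phi_\e$ is divergence free and smooth, hence bounded on $\bar A$, so $\phi_\e\in L^3_w$; and since $\curl$ is a local first-order operator, the $H^1$ convergence $a_\e\to a$ on a neighborhood of $\bar A$ gives $\phi_\e=\curl a_\e\to\curl a=f$ in $L^2(A)$. Taking $\phi^{(k)}:=\phi_{\e_k}$ with $\e_k\to0$ and invoking the scaling reduction finishes the proof.

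The main obstacle is the construction of the $0$-DSS potential $a$: verifying that the flux vanishes (straightforward from DSS), producing \emph{some} $H^1_{\loc}$ primitive, and then correcting it to be exactly $0$-DSS. The last point is a cohomological equation for the $\Z$-action generated by $x\mapsto\la x$, most cleanly handled on the quotient $S^1\times S^2$, where the topology of $\R^3\setminus\{0\}$ and the DSS symmetry must be reconciled (the flux $2$-form is not invariant but scales under the action, so the relevant cohomology is twisted by this character). A secondary point requiring care is that the mollification preserve the $0$-DSS symmetry \emph{exactly} while still converging in local $H^1$, which is why I mollify on the cylinder rather than in $\R^3$. I note finally that using the \emph{local} operator $\curl$ is essential: a Leray-projection approach would force one to apply the global operator $\nb\Delta^{-1}\div$ to fields that grow at spatial infinity (DSS fields are never in $L^2(\R^3)$), and the resulting nonlocal tail cannot be controlled in $L^2(A)$.
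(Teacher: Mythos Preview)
Your approach is genuinely different from the paper's. The paper never uses a vector potential: instead it builds a DSS partition of $f$ into compactly supported divergence-free pieces via cutoffs and the Bogovski\u\i\ map. Concretely, it takes a radial cutoff $Z_0$ equal to $1$ on $\{|x|\ge 1\}$ and $0$ on $\{|x|\le\la^{-1}\}$, corrects the divergence of $Z_0 f$ on the annulus $\{\la^{-1}\le|x|\le 1\}$ by Bogovski\u\i\ to get $\Phi_0\in H^1_0$, rescales both to obtain $Z_i,\Phi_i$, and sets $f_i=\tfrac12(Z_i-Z_{i+2})f+\tfrac12(\Phi_i-\Phi_{i+2})$; then $f=\sum_i f_i$ with each $f_i$ divergence free and supported in a fixed dyadic shell. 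Approximating the single piece $f_0$ in $L^2$ by smooth compactly supported solenoidal $\phi_0^{(k)}$ (standard density), rescaling, and summing produces $\la$-DSS fields $\phi^{(k)}$ with $|\phi^{(k)}(x)|\le c_k|x|^{-1}$, hence $\phi^{(k)}\in L^3_w$, and convergence on $B_1$ follows from exactly the scaling identity you wrote down.

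Your route through a $0$-DSS potential $a$ with $\curl a=f$ is plausible, and once $a$ exists your mollification-on-the-cylinder step is clean. But the existence of $a$ --- which you rightly flag as the main obstacle --- is not closed in your sketch. Having found \emph{some} primitive $a_0$, the correction to exact $0$-DSS-ness requires solving $b(\la x)-b(x)=-\psi(x)$ with $\nabla\psi=a_0(\la\cdot)-a_0$, and the telescoping candidates $\sum_k\psi(\la^{\pm k}x)$ have no reason to converge without a priori control on $\psi$ near $0$ or $\infty$, which you do not have. The tool you allude to --- twisted de Rham cohomology on $S^1\times S^2$ with the flat real line bundle $L_\la$ of monodromy $\la$ --- does work: since $H^*(S^1;L_\la)=0$ for $\la\neq 1$, K\"unneth gives $H^2(S^1\times S^2;L_\la)=0$, and $L^2$ Hodge theory then furnishes the $0$-DSS potential in $H^1$ directly (no separate flux argument is needed). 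That is correct but noticeably heavier machinery than the paper's Bogovski\u\i\ construction, which sidesteps all topology; the trade-off is that your approximants are canonical (mollify one scalar-invariant object and take $\curl$), while the paper's are elementary and fully explicit.
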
 
 
The main difficulty in proving this lemma is that each $f^{(k)}$ must be divergence free. We thus need to use the Bogovskii map which we presently recall, see 
\cite{MR631691}.

\begin{lemma}\label{lemma.bogovskii}Let $\Om$ be a bounded Lipschitz domain in $\R^n$, $2\le n <
	\infty$. There is a linear map $\Psi$ that maps a scalar $f \in L^q(\Om)$ 
	with $\int_\Om f = 0$, $1<q<\infty$, to a vector field $v=\Psi f \in W^{1,q}_0(\Om;\R^n)$
	and
	\begin{equation*}
	\div v= f, \quad \norm{v}_{W^{1,q}_0( \Om)} \le c(\Om,q) \norm{f}_{L^q( \Om)}.
	\end{equation*}
	The map $\Psi$ is independent of $q$ for $f \in C_c^\infty( \Om)$. 
\end{lemma}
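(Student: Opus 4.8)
The plan is to prove the lemma first for domains star-shaped with respect to an open ball, where Bogovski\u{\i}'s explicit integral formula is available, and then to pass to a general bounded Lipschitz domain by decomposing it into finitely many such star-shaped pieces and splitting $f$ accordingly. For the star-shaped case, I would fix an open ball $B \Subset \Om$ with respect to which $\Om$ is star-shaped, fix $\om \in C_c^\infty(B)$ with $\int \om = 1$, and define $\Psi f = v$ by the kernel
\begin{equation*}
v_i(x) = \int_\Om f(y)\, \frac{x_i - y_i}{|x-y|^n} \left( \int_{|x-y|}^\I \om\!\left( y + \xi\, \frac{x-y}{|x-y|} \right) \xi^{n-1}\, d\xi \right) dy,
\end{equation*}
which is manifestly linear in $f$ and, since the kernel does not involve $q$, is $q$-independent on smooth data.

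Differentiating under the integral sign gives the pointwise identity $\div v = f - \om \int_\Om f$, so that $\div v = f$ precisely when $\int_\Om f = 0$; star-shapedness guarantees that the relevant segment from $y$ toward $x$ stays in $\Om$, that $v$ is supported in $\overline\Om$, and that $v$ vanishes on $\pd\Om$, hence $v \in W^{1,q}_0(\Om;\R^n)$. The crux is the bound $\norm{\nb v}_{L^q} \lec \norm{f}_{L^q}$. Differentiating the kernel produces, for each pair $i,j$, a kernel that splits into a principal-value part homogeneous of degree $-n$ in $x-y$, smooth in the angular variable, and with vanishing mean over the unit sphere, plus a remainder bounded by $C|x-y|^{1-n}$ supported in the bounded set $\Om$. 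The principal-value part is a Calderon-Zygmund-type singular integral (with variable coefficients through the dependence on $\om$) whose kernel satisfies the standard size, regularity, and cancellation conditions, hence is bounded on $L^q(\Om)$ for every $1<q<\I$; the remainder is handled by Young's inequality since $|x-y|^{1-n}$ is locally integrable. Combined with the Poincar\'e inequality, valid because $v\in W^{1,q}_0$, this yields $\norm{v}_{W^{1,q}_0(\Om)} \le c(\Om,q)\,\norm{f}_{L^q(\Om)}$.

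For a general bounded Lipschitz domain I would invoke the standard fact that $\Om$ can be written as a finite union $\Om = \bigcup_{i=1}^m \Om_i$ of open subdomains, each star-shaped with respect to a ball, arranged so that consecutive overlaps $\Om_i \cap \Om_{i+1}$ are nonempty; this is where the Lipschitz structure is used, by locally flattening the boundary. Taking a partition of unity $\{\theta_i\}$ subordinate to $\{\Om_i\}$ and setting $c_i = \int \theta_i f$ gives $\sum_i c_i = \int_\Om f = 0$. The only obstruction to applying the star-shaped result piecewise is that $\theta_i f$ need not have zero mean, so I would redistribute masses along the chain: choosing $g_i \in C_c^\infty(\Om_i \cap \Om_{i+1})$ with $\int g_i = 1$ and partial sums $b_i = \sum_{j\le i} c_j$, define
\begin{equation*}
f_i = \theta_i f - b_i\, g_i + b_{i-1}\, g_{i-1},
\end{equation*}
with conventions $b_0 = 0$ and $g_0 = g_m = 0$. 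Then $\supp f_i \subset \Om_i$, one computes $\int_{\Om_i} f_i = c_i - b_i + b_{i-1} = 0$, and the corrections telescope so that $\sum_i f_i = f$. Setting $\Psi f = \sum_i \Psi_i f_i$, with $\Psi_i$ the star-shaped operator on $\Om_i$ extended by zero, yields a linear map satisfying $\div \Psi f = \sum_i f_i = f$ and, since $m$ is finite, $\norm{\Psi f}_{W^{1,q}_0(\Om)} \le \sum_i \norm{\Psi_i f_i}_{W^{1,q}_0(\Om_i)} \lec \norm{f}_{L^q(\Om)}$.

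Finally, because every ingredient---the kernel, the cutoffs $\theta_i$ and $g_i$, and the mass corrections---is defined independently of $q$, the operator $\Psi$ restricted to $C_c^\infty(\Om)$ produces the same vector field for all exponents, giving the asserted $q$-independence. The main obstacle in this program is the Calderon-Zygmund estimate on the differentiated kernel in the star-shaped case: one must verify carefully that the principal-value singular kernel genuinely satisfies the homogeneity, angular-regularity, and mean-zero conditions needed to invoke $L^q$ boundedness of (variable-coefficient) singular integrals uniformly over $1<q<\I$. By contrast, the decomposition into star-shaped pieces and the mass-redistribution argument are combinatorial and routine once connectedness of $\Om$ is exploited.
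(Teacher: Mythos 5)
The paper does not prove this lemma at all: it is quoted as a known result and cited directly to Bogovski\u{\i} \cite{MR631691}, so there is no in-paper argument to compare against. Your proposal is a faithful reconstruction of the standard proof of that cited result (Bogovski\u{\i}'s original argument, as presented, e.g., in Galdi's book): the explicit integral operator on domains star-shaped with respect to a ball, the pointwise identity $\div v = f - \om\int_\Om f$, the Calder\'on--Zygmund estimate for the differentiated kernel, and the reduction of a bounded Lipschitz domain to finitely many star-shaped pieces with the telescoping mass redistribution $f_i = \theta_i f - b_i g_i + b_{i-1} g_{i-1}$; your bookkeeping there is correct, since $\int f_i = c_i - b_i + b_{i-1} = 0$, $\supp f_i \subset \Om_i$, and $\sum_i f_i = f$ because $b_0 = 0$ and $b_m = \int_\Om f = 0$. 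Three details deserve more care than you give them. First, for general $f \in L^q$ the claim that ``$v$ vanishes on $\pd\Om$, hence $v \in W^{1,q}_0$'' should be routed through density: for $f \in C_c^\infty(\Om)$ one checks that $\supp v$ lies in the star hull of $\supp f$ with respect to $\overline{B}$, a compact subset of $\Om$, so $v \in C_c^\infty(\Om;\R^n)$, and then the $L^q$ bound together with density of zero-mean smooth functions in $\{f \in L^q(\Om) : \int_\Om f = 0\}$ extends $\Psi$ continuously into $W^{1,q}_0$ (this density step also transfers the $q$-independence from smooth data to the claim as stated). Second, the singular-integral step is genuinely the crux, as you say: after differentiation one obtains a kernel $K_{ij}(x,x-y)$ that for each fixed $x$ is homogeneous of degree $-n$ in the second variable with vanishing spherical mean, plus a weakly singular remainder and a bounded multiple of $f(x)$; the $L^q$ bound then follows from the Calder\'on--Zygmund theorem for such variable kernels, and verifying the smoothness-in-$x$ and cancellation hypotheses is where most of the technical work of the full proof resides. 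Third, arranging the star-shaped pieces into a chain with nonempty consecutive overlaps uses connectedness of $\Om$ (automatic for a domain) plus a reordering argument, and the local star-shapedness for Lipschitz $\Om$ comes from the interior cone property; both are routine but should be stated. With these standard details filled in, your outline is a complete and correct proof of the cited lemma.
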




\begin{proof}[Proof of Lemma \ref{lemma.approx}]

Let $Z_0(x)$ be $C^\I(\R^3)$ satisfy 
\[Z_0(x)=
\begin{cases}
1 & |x|>1   
\\\mbox{radial, increasing}& \la^{-1}\leq |x|\leq 1
\\0 & |x|<\la^{-1}
\end{cases}.
\]
Note that $\nabla\cdot (Z_0 f)=f\cdot \nabla Z_0$ -- i.e.~$Z_0 f$ is not divergence free.  We can correct this using Lemma \ref{lemma.bogovskii} with $q=2$ for the scalar $-f\cdot \nabla Z_0$ noting that $f$ is locally square integrable and 
\[
\int -f\cdot \nabla Z_0\,dx =0,
\]
because $f$ is divergence free. Denote by $\Phi_0$ the image of $-f\cdot \nabla Z_0$ under a Bogovskii mapping with domain $\{x:\la^{-1}\leq|x| \leq 1 \}$.  Then, {$\Phi_0 \in W^{1,2}_0(B_1\setminus B_{\la^{-1}})$ and}
\[
\nabla\cdot ( Z_0f + \Phi_0) =0.
\]
Let $Z_i(x)=Z_0(x/\la^i)$ and $\Phi_i(x) = \la^{-i}\Phi_0(\la^{-i}x)$ for all $i\in \Z$.  It follows that 
\[
\nabla\cdot(	Z_if+\Phi_i) = 0,
\]
for all $i\in \Z$. 
 Note that $\operatorname{supp}(Z_{j}-Z_{j+2}) =\{x: \la^{j-1}\leq |x| \leq \la^{j+2} \} $. 
Let
\[
{f_i  = \frac 12 (Z_{i} - Z_{i+2}) f  +\frac 12 \bke{\Phi_{i} - \Phi_{i+2}} .}
\]
Then each $f_i$ is divergence free and supported on $B_{\la^{i+2}}\setminus B_{\la^{i-1}}$. Furthermore, 
\[
{
f=\textstyle \sum_{i\in \Z} f_i
}
\] 
where convergence is understood in the point-wise sense for all $x\neq 0$.  To confirm this note that, if $x$ satisfies $\la^i\leq|x|<\la^{i+1}$ then $x\in\operatorname{supp} (Z_{j}-Z_{j+2})$ if and only if $j\in \{ i-1,i,i+1\}$.  It follows that $\sum_{j\in \Z} (Z_j -Z_{j+2})(x) = 2$.  On the other hand, $\operatorname{supp} \Phi_j=\{x: \la^{j-1} \leq|x|\leq \la^{j} \} $ and, therefore, $\sum_{j\in\Z}( \Phi_j(x)-\Phi_{j+2}(x) ) = \Phi_{i+1}(x)-\Phi_{i+1}(x)=0$.  It follows that $f=\sum_{i\in \Z} f_i$.

Assume $\phi_0^{(k)}$ is a sequence of divergence free vector fields in $C_0^\I(B_{\la^{2}}\setminus B_{\la^{-1}} )$ so that $\phi_0^{(k)}\to f_0$ in $L^2(B_{\la^{2}}\setminus B_{\la^{-1}} )$. Let $\phi_i^{(k)} = \la^{-i}\phi_0^{(k)}(\la^{-i}x)$.  Then the vector field
\[
\phi^{(k)}=\sum_{i\in \Z} \phi_i^{(k)},
\]  
is a divergence free, $\la$-DSS vector field, and satisfies
\[
|\phi^{(k)}(x)|\le c_k |x|^{-1},
\]
(where the proportionality constants $c_k$ are \emph{not} uniformly bounded with respect to $k$).  Hence, $\phi^{(k)}\in L^3_w$. We finish by arguing that $\phi^{(k)}\to f$ in $L^2(B_1)$. We know that $\int_{B_{\la^2}\setminus B_{\la^{-1}}} (\phi_0^{(k)}-f )^2\,dx \to 0$ as $k\to \I$.  Using the definition of $\phi^{(k)}$ and the fact that $f$ is discretely self-similar we have, 
 {letting $A_i = B_{\la^i} \setminus B_{\la^{i-1}}$, that
\begin{align*}
\int_{B_1} (\phi^{(k)}-f)^2\,dx &=   \sum_{i\leq 0} \int_{A_i} (\phi^{(k)}-f)^2\,dx
\\& =   \sum_{i\leq 0} \la^i \int_{A_0} (\phi^{(k)}-f)^2\,dx  = \frac\la{\la-1}\int_{A_0} (\phi^{(k)}-f)^2\,dx.
\end{align*}
In $A_0$, we have $\phi^{(k)}-f =\sum_{i=-2}^0 ( \phi^{(k)}_i-f_i)$. Thus
\EQN{
\norm{\phi^{(k)}-f}_{L^2(A_0)} &\le \sum_{i=-2}^0 \norm{ \phi^{(k)}_i-f_i}_{L^2(A_0)}
\\
& =  \sum_{k=0}^2 \la^{-k/2} \norm{ \phi^{(k)}_0-f_0}_{L^2(A_k)}
\le 3 \norm{ \phi^{(k)}_0-f_0}_ {L^2(B_{\la^2} \setminus B_{\la^{-1}})},
}
}
which completes the proof.
\end{proof}

\subsection{DSS solutions in a neighborhood of the origin}\label{sec.extension}
 
 In the introduction we saw that any time-periodic solution $u$ to \eqref{eq:timeDependentLeray} corresponds to a DSS solution $v$ after the change of variables \eqref{variables}.  Distributionally, $u$ is a time-periodic solution to \eqref{eq:timeDependentLeray} if and only if 
 \begin{equation}\label{u.eq-weak}
 \int_{s'}^{s'+T} \big( (u,\partial_s f)-(\nabla u,\nabla f)+
 ( {\frac 12 u+\frac12 y\cdot\nabla u } -u\cdot\nabla u ,f)  \big)  \,ds =0,
 \end{equation} 
 holds for all $s'\in \R$ and $f \in \mathcal D_T$ where  $\mathcal D_T$ denotes the collection of all smooth divergence free vector fields in $\R^3 \times \R$ which 
 are time periodic with period $T$ and whose supports are compact in space.  In \cite{BT1}, this definition was used with $s'=0$ since the goal was to extend a solution on $[0,T]$ to $\R$ using periodicity.  The same modification can be made here based on the observations that if $u$ satisfies \eqref{u.eq-weak} 
then $u$ can be extended to a time-periodic solution on $\R$ and if $u$ is a time-periodic solution on $\R$ then $u$ satisfies \eqref{u.eq-weak}. 
 
 Since there is a one to one correspondence between time-periodic solutions to \eqref{eq:timeDependentLeray} and DSS solutions, an equivalent characterization of DSS solutions is obtained by reformulating \eqref{u.eq-weak} in the physical variables. For $f\in \mathcal D_T$ let $\zeta_f(x,t)=t^{-1}f(y,s)$. {Note $\zeta_f(x,t) = \la^2 \zeta_f(\la x,\la^2 t)$. }
  Then, $v$ is $\la$-DSS if and only if
  \begin{equation}\label{v.eq-weak}
  {\int_t^{\la^2t} }\big( (v,\partial_t \zeta_f)-(\nabla v,\nabla \zeta_f)-(v\cdot\nabla v ,\zeta_f)  \big)  \,d\tau =0, 
  \end{equation}
  for all $t>0$ and $f\in \mathcal D_T$, since \eqref{u.eq-weak} is {just \eqref{v.eq-weak} in} similarity variables. 
  {Note that $(v,\zeta_f)|_{\tau=\la^2t} = (v,\zeta_f)|_{\tau=t}$.}
 It follows that, if $v$ is a solution to \eqref{eq.NSE} that satisfies \eqref{v.eq-weak} for $t=1$, then $v|_{\tau\in [1,\la^2]}$ can be extended to a $\la$-DSS solution for all positive times.

 Fix $k\in \Z$ and let $Q_k = B_{\la^k}(0)\times(0,\la^{2k})$.  Our goal is to give a third characterization of discrete self-similarity on $Q_k$.  Let $f\in \mathcal D_T$ be given and $\zeta_f$ be as above.  Let $R$ be large enough so that, for all $t\in [1,\la^2]$, the support of $\zeta_f(t)$ is a subset of $B_R(0)$ and choose $m=m(f)\in \Z$ so that $R/\la^m<\la^k$ and $\la^{2-2m}<\la^{2k}$.  It follows that
 \[
 	B_{R/\la^m}(0)\times [\la^{-2m},\la^{2-2m}] \subset Q_k.
 \]
Extend $\zeta_f$ to all $t>0$ using the following scaling: For $(x,t)\in \R^3\times (0,\I)$, let 
 \[
 \zeta_f(x,t)= \la^{2i}\zeta_f(\la^ix,\la^{2i}t),
 \] 
 where $i$ is chosen so that $\la^{2i}t\in [1,\la^2]$.
Since $\zeta_f|_{\R^3\times [1,\la^2]}$ is compactly supported in space, it's spatial support shrinks as $t\to 0^+$.  In particular, for $t\in [\la^{-2m},\la^{2-2m}]$, $\supp \zeta_f \subset Q_k$.  For $m\in \Z$, let 
\EQ{\mathcal D_{Q_k}^m=  \{   & \phi\in C^\I(\R^3\times (0,\I)): \supp \phi|_{t\in [\la^{-2m},\la^{2-2m}]} \subset Q_k  
	\\& \text{ and } \forall\, (x,t)\in \R^3\times (0,\I), \exists\,    f\in \mathcal D_T
 \text{ such that } \phi(x,t)=\zeta_f(x,t).   
\} }
It is easy to see that, $\cup_{m\in \Z}  \mathcal D_{Q_k}^m = \mathcal D_T$.

Re-scaling \eqref{v.eq-weak} gives 
 \begin{equation}\label{v.eq-core}
  \int_{\la^{-2m}}^{\la^{2-2m}} \big( (v,\partial_t \zeta_f)-(\nabla v,\nabla \zeta_f)-(v\cdot\nabla v ,\zeta_f)  \big)  \,dt' =0, 
  \end{equation}
  where $t'=t/\la^{2m}$ and the innerproducts are taken with respect to the re-scaled spatial variable $x'=x/\la^m$.  In particular, the integral is computed over a subset of $Q_k$ and is identical to the same integral with $\zeta_f$ replaced by $\phi$ for some $\phi\in \mathcal D^m_{Q_m}$.    Thus, if $v$ is a solution to \eqref{eq.NSE}, and $\phi\in \mathcal D_{Q_k}^m$ for some $m\in \Z$, then \eqref{v.eq-weak} is satisfied if and only if \eqref{v.eq-core} is satisfied for the $f\in \mathcal D_T$ for which $\zeta_f=\phi$.  This leads to the following extendability property: If $v$ is a solution to \eqref{eq.NSE} on $Q_k$ and satisfies \eqref{v.eq-core} for every $m\in \Z$ and $\phi \in \mathcal D_{Q_k}^m$,
   then $v$ can be extended to a discretely self-similar solution on $\R^3\times (0,\I)$; in other words, if a solution is DSS in a neighborhood of the origin, then it can be extended to a DSS solution on $\R^3\times (0,\I)$.

\subsection{Construction of DSS solutions}\label{sec.construction}

\begin{proof}[Proof of Theorem \ref{thrm.main}]  Fix $\la > 1$ and assume $v_0\in L^2_{\loc}$ is a divergence free $\la$-DSS vector field. 
Let $\{ v_0^{(k)} \}$ be the sequence of vector fields $\{\phi^{(k)}\}$ from Lemma \ref{lemma.approx} applied to $v_0$.  Then, the values $\|v_0^{(k)}\|_{L^2(B_1)}$ are uniformly bounded and $\|v_0^{(k)}-v_0\|_{L^2(B_1)}\to 0$ as $k\to \I$. 
Since $v_0^{(k)}\in L^3_w$ and is $\la$-DSS, by \cite{BT1} there exists a $\la$-DSS local Leray solution $v_k$ to \eqref{eq.NSE} and an associated pressure $\pi_k$ having initial data $v_0^{(k)}$ for every $k\in \N$. 
By Proposition \ref{lemma.energy}, $v_k$ are uniformly bounded in $L^\I(0,T;L^2(B_1))\cap L^2(0,T;H^1(B_1))$ {(hence also in $L^{10/3}(0,T;L^{10/3}(B_1))$)} for some $T$ which depends only on $\la$ and $\| v_0^{(k)} \|_{L^2(B_1)}$.   As usual (cf. \cite{BT1,KiSe,LR2}), there exists a distribution $v$ and a subsequence of $\{v_k\}$ (still indexed by $k$ for simplicity) so that $v_k$ converges to $v$ in the weak star topology on $L^\I(0,T;L^2(B_1))$, in the weak topology on $L^2(0,T;H^1(B_1))$, and in $L^2(0,T;L^2(B_1))$. 
{Since they are uniformly bounded in $L^{10/3}(0,T;L^{10/3}(B_1))$, they also converge in $L^q(0,T;L^q(B_1))$ for any $q<10/3$}. 
By the pressure estimate 
 {\eqref{prop3.1-2} in Proposition \ref{lemma.energy}, 
$\pi_k$ are uniformly bounded in $L^{3/2}(0,T;L^{3/2}(B_1))$ 
by $C(\la, \| v_0 \|_{L^2(B_\la)})$}
and, therefore, we may extract a subsequence which converges weakly to a distribution $\pi \in L^{3/2}(0,T;L^{3/2}(B_1))$.

Fix  $\kappa\in \Z$  so that $\la^\kappa <1$ and $\la^{2\kappa}<T$.  Then, $Q_\kappa
=B_{\la^\kappa} \times (0,\la^{2\kappa})
 \subset B_1\times (0,T)$. Therefore $v_k$ satisfies \eqref{eq.NSE} on $Q_\kappa$  and satisfies \eqref{v.eq-core} for every $m\in \Z$ and $\phi\in \mathcal D^m_{Q_\kappa}$.    
 Thus, $v$ can be extended to a DSS solution on $\R^3\times (0,\I)$ (which we still denote by $v$). 

For compact subsets $K$ of $B_1$, we automatically have $\lim_{t\to 0^+} \| v-v_0\|_{L^2(K)}=0$.  For a general compact subset $K$ of $\R^3$, we have $K'=\la^{m} K \subset B_1$ for some $m \in \Z$, and
\[
\int_K |v(x,t)- v_0(x)|^2\,dx =\la^{-m} \int_{K'} |v(x',\la^{2m}t) -v_0(x') |^2\,dx'.
\]
It follows that $\lim_{t\to 0^+} \| v(t)-v_0\|_{L^2(K)}=0$ for every compact set $K\subset \R^3$. A similar re-scaling argument also implies that $v\in L^\I(0,T';L^2(K))\cap L^2(0,T';H^1(K))$   and $\pi \in L^{3/2}(0,T';L^{3/2}(K))$ for any $T'>0$ and compact subset $K$ of $\R^3$.

To confirm that $v$ satisfies the local energy inequality, first note that each $v_k$ satisfies the local energy inequality
\begin{align*}&\int |v_k(t)|^2\phi \,dx +2\int \int |\nabla v_k|^2\phi\,dx\,dt 
\\&\leq\int |v_0^{(k)}|^2\phi \,dx+ \int\int |v_k|^2(\partial_t \phi + \Delta\phi )\,dx\,dt +\int\int (|v_k|^2+2\pi_k)(v_k\cdot \nabla\phi)\,dx\,dt,
\end{align*}
for all non-negative $\phi \in C_0^\I(\R^3\times \R^3_+)$.
Furthermore, the right hand sides of the energy inequality for $v^{(k)}$ converge to the right hand side of the energy inequality for $v$ as $k\to\I$ while the left hand-sides are lower semi-continuous (cf. \cite[(A.51)]{CKN}). The local energy inequality for $v$ plainly follows.

Finally, note that $\pi_k$ satisfies the formula \eqref{prop3.1-3}. Applying Lemma \ref{lemma.pressure} to the above sequence and limit implies that $\pi$ satisfies the desired pressure formula in $L^{3/2}(0,T; L^{3/2} (B_1))$.  Re-scaling establishes the formula in $L^{3/2}_{\loc}(\R^3\times (0,\I))$.
\end{proof}

\section*{Acknowledgments}
The research of Tsai was partially supported by the NSERC grant 261356-13 (Canada).

Zachary Bradshaw, Department of Mathematics, University of Arkansas, Fayetteville, AR 72701, USA;
e-mail: zb002@uark.edu
\medskip

Tai-Peng Tsai, Department of Mathematics, University of British
Columbia, Vancouver, BC V6T 1Z2, Canada;
e-mail: ttsai@math.ubc.ca

\end{document}